\theoremstyle{definition} 
\newtheorem{definition}{Definition}
\theoremstyle{plain} 
\newtheorem{theorem}{Theorem}
\newtheorem*{theorem*}{Theorem}
\newtheorem{lemma}[theorem]{Lemma}
\newtheorem*{corollary*}{Corollary}
\theoremstyle{remark} 
\newtheorem{remark}{Remark}
\newtheorem{example}{Example}
\newtheorem{question}{Question}
\newtheorem*{notation}{Notation}
\newcommand{\Id}{\operatorname{I}}
\newcommand{\PP}{\mathcal P}
\newcommand{\md}{\ell^{n}}
\DeclareMathOperator{\Aut}{\mathrm{Aut}}
\DeclareMathOperator{\tr}{\mathrm{tr}}
\DeclareMathOperator{\GL}{\mathrm{GL}}
\title{\normalfont{On the variation of Frobenius eigenvalues in a skew-abelian Iwasawa tower}} 
\author{\spacedlowsmallcaps{Asvin G}\footnote{\textit{Department of Mathematics; University of Wisconsin, Madison.}}
\footnote{\textit{  email: gasvinseeker94@gmail.com}}} 
\date{} 
\begin{document}


\renewcommand{\sectionmark}[1]{\markright{\spacedlowsmallcaps{#1}}} 
\lehead{\mbox{\llap{\small\thepage\kern1em\color{halfgray} \vline}\color{halfgray}\hspace{0.5em}\rightmark\hfil}} 

\pagestyle{scrheadings} 


\maketitle 

\setcounter{tocdepth}{2} 


\begin{abstract}
We study towers of varieties over a finite field such as $y^2 = f(x^{\ell^n})$ and prove that the characteristic polynomials of the Frobenius on the \'etale cohomology show a surprising $\ell$-adic convergence. We prove this by proving a more general statement about the convergence of certain invariants related to a skew-abelian cohomology group. The key ingredient is a generalization of Fermat's Little Theorem to matrices. Along the way, we will prove that many natural sequences of polynomials $(p_n(x))_{n\geq 1} \in \mathbb Z_\ell[x]^{\mathbb N}$ converge $\ell$-adically and give explicit rates of convergence.
    
\end{abstract} 

\tableofcontents 



\newpage 


\begin{notation}
We will work throughout over a fixed finite field $\mathbb F_q$. A curve $C$ over $\mathbb F_q$ refers to a smooth, projective, geometrically connected scheme of dimension $1$. The base change to the algebraic closure $\overline{\mathbb F}_q$ is denoted by $\overline{C}$. We denote its \'etale cohomology with $\mathbb Z_\ell$ coefficients by $H^1_{\text{\'et}}(\overline{C},\mathbb Z_\ell)$. By standard functoriality arguments, it comes endowed with a linear action of the geometric Frobenius $\sigma_q$. We fix an auxiliary prime $\ell$ throughout and for simplicity assume that $\ell > 2$ and $q\equiv 1 \pmod{\ell}$. \footnote{As usual, the theorems go through if $\ell=2$ with appropriately stronger hypothesis. For instance, if $\ell=2$ then we need $q \equiv 1 \pmod{\ell^2}$.}
\end{notation}

\section{Introduction}

The eigenvalues of the Frobenius on the \'etale cohomology of a smooth, projective variety over a finite field carry significant arithmetic information. By the Weil conjectures, these eigenvalues are algebraic integers and their absolute values under any complex embedding are understood.

We draw inspiration from Iwasawa theory to study the asymptotic behaviour of these eigenvalues in an "Iwasawa tower" and in particular, we show that there is a strong $\ell$-adic convergence statement to be made in many natural examples. The Iwasawa algebras arising in this study are non-commutative due to the non trivial action of the Frobenius on this monodromy group and we hope that this perspective is interesting too. Let us begin with an example.

\begin{example}\label{eg: motivating example}

Consider the smooth projective curves $C_n$ corresponding to the equations
\[Y^2 = X^{2^n} + 1 \text{ over } \mathbb F_5.\]
They define a tower $\dots\to C_2\to C_1$ with maps $C_{n+1} \to C_n$ defined by $(X,Y) \to (X^2,Y)$. The characteristic polynomial of $\sigma_5$ on $H^1_{\text{\'et}}(\overline{C}_n,\mathbb Z_\ell)$ is
\[f_n(x) \coloneqq \det\left(1-\sigma_2x|H^1_{\text{\'et}}(\overline{C}_n,\mathbb Z_\ell)\right) = (1-2x+5x^2)\prod_{i=1}^{n-2}(1 + x^{2^{i}}5^{2^{i-1}})^2.\]
Note that $f_{n-1}(x)$ divides $f_n(x)$ and the inverse of the roots of $g_n(x) = f_n(x)/f_{n-1}(x)$ are of the form $\sqrt{5}\zeta$ for $\zeta$ a root of unity of order $2^{n-1}$ for $n \geq 3$. In Section \ref{sec: examples}, we show that for $n$ sufficiently large, the normalized (by $\alpha \to \alpha/|\alpha|$ so that the complex absolute value is $1$\footnote{we note that the complex norm $|\alpha|$ is independent of the embedding to $\mathbb C$ by the Weil conjectures}) roots of $g_{n+1}(x)$ are exactly all possible $\ell$-th roots of the normalized roots of $g_n(x)$. 
\end{example}

In fact, we prove the same statement for towers of Fermat curves (from which the above follows) and Artin-Schreier curves. The proof of this statement follows from realizing the roots of $g_n(x)$ as Jacobi sums and using results of Coleman \cite{Coleman} on identities for Gauss sums (coming from the Gross-Kubota p-adic Gamma function \cite{Gross-Koblitz}).

\subsection{A congruence on characteristic polynomials}

This prompts the question of what happens in a more general context. For instance, we could take a map $f: C \to \mathbb P^1$ or $f: C \to A$ for $A$ an abelian variety of dimension $d$ and pull back by the following diagrams:
\[\begin{tikzcd}
C_n \arrow[d, "\pi_n"] \arrow[r, "f_n"] & \mathbb P^1 \arrow[d, "t \to t^{\ell^n}"] & or & C_n \arrow[d, "\pi_n"] \arrow[r, "f_n"] & A \arrow[d, "{[\ell^n]}"] \\
C \arrow[r, "f"]                        & \mathbb P^1                               &    & C \arrow[r, "f"]                        & A                        
\end{tikzcd}\]
We denote the first family of examples by Case A and the second family by Case B. Note that in both the families, the $C_n\to C$ are geometrically (branched) Galois extensions with an abelian Galois group $G_n \cong (\mathbb Z/\ell^n\mathbb Z)^b$ for $b=1$ or $2d$ in Case A and Case B respectively. Note that the $G_n$ themselves have an action of $\sigma_q$ and this will be crucial.

We define $M_n = H^1_{\text{\'et}}(\overline{C}_n,\mathbb Z_\ell)/H^1_{\text{\'et}}(\overline{C},\mathbb Z_\ell)$, $f_n(x)$ to be the characteristic polynomial of $\sigma_q$ on $M_n$ and $g_n$ to be the characteristic polynomial $\det\left(1 - \sigma_qx|M_n/M_{n-1}\right)$. It does not seem to be true that $g_n$ determines $g_{n+1}$ as in Example \ref{eg: motivating example}. Nonetheless, the following weaker convergence statement is true.

Let $k_n$ be the order of $\sigma_q$ acting on $\mu_{\ell^n} = \mathbb G_m[\ell^n]$ in Case A while in Case B, $k_n$ is a close relative of the order of $\sigma_q$ acting on $A[\ell^n]$. In particular, it is independent of $C$ and can be made completely explicit. In either case $k_n$ is of the form $\max\{1,\ell^{n-n_0}\}$ with $n_0$ depending on which case we are considering.

\begin{theorem*}[Theorem \ref{thm: main thm, geom}]
In the above set up (with some mild assumptions on $f$ and $q$),
\begin{enumerate}
    \item We have a factorization into monic polynomials:
            \[f_m(x) = \prod_{n\leq m}g_n(x)\]
        where the $g_n$ are independent of $m$.
    \item There exist polynomials $h_n(y), \tilde{h}_n(y) \in \mathbb Z[y]$ such that, \textbf{in Case A}
        \[g_n(x) = h_n(x^{k_n}).\]
    while in \textbf{Case B}
    \[    g_n(x) = \tilde{h}_n(x^{k_n}).
    \]
    \item 
    \textbf{In Case A:} For $n$ sufficiently large so that $k_{n+1} = \ell k_n$ (Lemma \ref{lem: order of Q}), we have the $\ell$-adic convergence
     \[  h_{n+1}(y) \equiv h_n(y) \pmod{\md}.
    \]
    In particular, the following $\ell$-adic limit exists in $\mathbb Z_\ell[y]$:
    \[h_\infty(y) = \lim_{n\to\infty}h_n(y).\]
    \textbf{In Case B:} For $n\geq n_0$ sufficiently large so that $k_{n+1} = \ell k_{n}$, we have the congruence
    \[    \tilde{h}_{n+1}(y) \equiv \tilde{h}_{n}^{\ell^{(b-1)}}(y) \pmod{\md}.
    \]
    In particular, the following $\ell$-adic limit exists in $\mathbb Z_\ell[y]$ with $\exp,\log$ defined formally as power series:
    \[\tilde{h}_\infty(y) = \exp\left(\lim_{n\to\infty}\frac{1}{\ell^{(n-n_0)(b-1)}}\log(\tilde{h}_{n}(y))\right).\]
\end{enumerate}
\end{theorem*}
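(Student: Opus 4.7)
The plan is to exploit the abelian Galois action of the geometric cover group $G_n$ on $M_n$, coupled with the twisted action of $\sigma_q$ on $G_n$, to decompose $M_n$ by characters and then read off $g_n$ from the $\sigma_q$-orbits on characters of exact level $n$. Part (1) is immediate: the pullback $H^1_{\text{\'et}}(\overline{C}_{n-1},\mathbb Z_\ell) \hookrightarrow H^1_{\text{\'et}}(\overline{C}_n,\mathbb Z_\ell)$ descends to an injection $M_{n-1} \hookrightarrow M_n$ of $\sigma_q$-modules, and multiplicativity of characteristic polynomials on the resulting short exact sequence yields $f_m(x) = \prod_{n\le m} g_n(x)$.

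For part (2), I would extend scalars to $W := \mathbb Z_\ell[\mu_{\ell^n}]$ so that characters of $G_n$ take values in $W^\times$, obtaining
\[
(M_n/M_{n-1})\otimes_{\mathbb Z_\ell} W \;=\; \bigoplus_{\chi} V_\chi,
\]
summed over characters $\chi$ of $G_n$ not factoring through $G_{n-1}$, with $\sigma_q(V_\chi) = V_{\chi\circ \sigma_q^{-1}}$. By the definition of $k_n$, every such $\sigma_q$-orbit has length exactly $k_n$ once $n$ is large. On the sum of the $k_n$ eigenspaces in one orbit, $\sigma_q$ acts by a block-cyclic matrix whose characteristic polynomial is $\det(x^{k_n} - T_\chi)$ with $T_\chi := \sigma_q^{k_n}|_{V_\chi}$. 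Multiplying over orbits and then descending via $\Gal(W/\mathbb Z_\ell)$-invariance produces $g_n(x) = h_n(x^{k_n})$ with $h_n \in \mathbb Z[y]$.

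Part (3) is the substantive content. Read $h_n(y)$ as $\det(y - \sigma_q^{k_n})$ on a choice of one eigenspace per $\sigma_q$-orbit at level $n$, and $h_{n+1}(y)$ analogously at level $n+1$ using $\sigma_q^{k_{n+1}} = (\sigma_q^{k_n})^\ell$. The generalized Fermat's Little Theorem for matrices proved earlier in the paper then applies: for a matrix $A$ over $\mathbb Z_\ell$, the characteristic polynomial of $A^{\ell^j}$ is controlled modulo $\ell^j$ by that of lower powers of $A$. A rank count using the number of characters of exact level $n$ shows the orbit-representative space has constant dimension in Case A ($b = 1$) but grows by a factor of $\ell^{b-1}$ per step in Case B; this stability (resp.\ growth) is precisely what converts the matrix Fermat congruence into $h_{n+1}(y) \equiv h_n(y) \pmod{\ell^n}$ (resp.\ $\tilde h_{n+1}(y) \equiv \tilde h_n(y)^{\ell^{b-1}} \pmod{\ell^n}$). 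The existence of the $\ell$-adic limit is then formal, using $\log$ and $\exp$ in $1 + y\mathbb Z_\ell[[y]]$ for Case B.

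The main obstacle will be the final convergence step: transferring a Fermat-type congruence for the characteristic polynomial of $A^{\ell^j}$ into a precise level-to-level comparison of $h_n$ and $h_{n+1}$. Concretely, one must exhibit a uniform integral operator realizing $h_n$, relate it to the analogous operator at level $n+1$ through the inclusion $M_n \hookrightarrow M_{n+1}$ and the character decomposition over $W$, and verify that the ramification of $W/\mathbb Z_\ell$ does not worsen the modulus $\ell^n$. A secondary subtlety is integrality: since $\mathbb Z_\ell[G_n]$ is not semisimple, the isotypic decomposition only literally exists after a ramified base change, and all congruences must be checked to survive the descent back to $\mathbb Z_\ell[y]$.
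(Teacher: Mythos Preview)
Your outline for Parts (1) and (2) matches the paper's approach: the character decomposition and the block-cyclic structure of $\sigma_q$ on an orbit of eigenspaces is exactly how the paper produces $g_n(x)=h_n(x^{k_n})$. One omission in Part (2): in Case B the orbit lengths $k_n(v)$ genuinely vary with $v$ (only their minimum is $k_n$), so $\tilde h_n$ must be built as $\prod_v h_{n,v}(y^{k_n(v)/k_n})$ rather than by a single substitution; but this is easily fixed.

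The real gap is in Part (3). You write $\sigma_q^{k_{n+1}}=(\sigma_q^{k_n})^\ell$ and propose to feed this into the Arnold--Zarelua theorem for a single matrix $A$. But there is no fixed matrix $A$ here. At level $n$, the operator $\sigma_q^{k_n}$ on $V_\chi$ is represented (after choosing a $\Lambda$-basis for $M_\infty$, which is the ``uniform integral operator'' you allude to) by
\[
A_n(v)=\prod_{i=1}^{k_n}F(\zeta_{\ell^n}^{Q^{-i}v}),
\]
a product of $k_n$ \emph{different} specializations of the matrix $F$. At level $n+1$ the analogous $A_{n+1}(v)$ is a product of $\ell k_n$ terms evaluated at $\zeta_{\ell^{n+1}}$-powers, and it is \emph{not} conjugate to $A_n(v)^\ell$: the root of unity has changed, and the exponents $Q^{-i}v$ only repeat modulo $\ell^n$, not modulo $\ell^{n+1}$. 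So Arnold--Zarelua by itself gives nothing.

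What the paper actually does is prove a separate abstract convergence theorem (Theorems \ref{thm: Main theorem, abstract, scalar} and \ref{thm: Main theorem, abstract, general}) for exactly these products $A_n(v)$. The proof expands the traces $\tr A_n(v)^d$ as sums over tuples $J$ of monomials in $F$, groups tuples by cyclic period, and controls each group by a combination of (i) Arnold--Zarelua applied to $F_K^r$ versus $F_K^{r/\ell}$ for periodic tuples, and (ii) divisibility of the character sums $\sum_{v}\zeta_{\ell^{n+1}}^{\lambda_J(v)}$ (Lemmas \ref{lem: sum of q-conj is zero} and \ref{lem: sum over all vectors is divisible}). Ingredient (ii) is what handles the change in the root of unity between levels, and it is entirely absent from your sketch. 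Without it the congruence simply does not follow; indeed, Remark \ref{rmk: failure of convergence, example} shows that even the single-orbit congruence $p_{n+1,v}\equiv p_{n,v}$ fails for non-scalar $Q$, so in Case B one must pass to the full product $r_n(y)$ before any congruence holds.
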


The first two properties of the theorem are fairly standard \footnote{As a reviewer pointed out, Part 2 has been "known for a long time and rediscovered several times", for instance see \cite[Lemma 1.1]{gordon1979linking}. For completeness, we give our own proof too.} and follow from understanding the structure of $M_n$ as a module over $\mathbb Z_\ell[G_n,\sigma_q]$ and in particular, depends on $\sigma_q$ having "large" orbits when acting upon the characters of $G_n$. The main body of the paper proves a more abstract statement (Theorem \ref{thm: Main theorem, abstract, general}) about the convergence of certain invariants of a non abelian cohomology group which implies the third part of the above theorem on the towers of curves. 

We note that this more abstract statement can be applied to many more geometric contexts than just our two examples of towers of curves above although we do not pursue this in our paper. It applies to any tower of varieties with an action of an abelian group such that the Frobenius action on the cohomology has a "large" orbit. For instance, we could take hypersurfaces of the form
\[f(x_0^{\ell^n},\dots,x_n^{\ell^n}) = 0 \subset \mathbb P^{n}_{\mathbb F_q}.\]
All the interesting cohomology is concentrated in the middle dimensional cohomology and the above theorem holds for the characteristic polynomial of the Frobenius action on this middle dimensional cohomology group.

We will see in Section \ref{sec: Iwasawa theory} that $M_\infty = \varprojlim_n M_n$ is a free module for a certain skew-abelian Iwasawa algebra and in particular, the characteristic polynomials we study are all determined by a Galois cohomology class with coefficients in matrices over a ring of power series. The bulk of this paper consists in studying the $\ell$-adic properties of these power series.

A key role in the study of the study of these algebraic properties is played by the following generalization of Fermat's little theorem (conjectured by Arnold in \cite{Arnold} and proven by Zarelua in \cite{Zarelua}):

\begin{theorem*}[Arnold - Zarelua, Theorem \ref{thm: Arnold-Zarelua}]
Let $A$ be a $r\times r$ matrix over $\mathbb Z_\ell$. Then, the congruence
\[\tr(A^{\ell^{n+1}}) \equiv \tr(A^{\ell^n}) \pmod{\ell^{n+1}}\]
holds for any prime $\ell$ and any $n \in \mathbb N$.
\end{theorem*}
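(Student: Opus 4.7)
The plan is to give a direct combinatorial proof by exploiting the cyclic symmetry of the trace. I begin by expanding
\[
\tr(A^N) = \sum_{(i_1,\dots,i_N)} a_{i_1 i_2} a_{i_2 i_3} \cdots a_{i_N i_1}
\]
and viewing each index tuple as a closed walk of length $N$ in the complete digraph on $\{1,\dots,r\}$, with weight equal to its summand. The group $\mathbb Z/N\mathbb Z$ acts by cyclic shift of coordinates, and this weight is invariant. Each tuple has a well-defined primitive period $d \mid N$; its orbit has size $d$, and restriction to the first $d$ coordinates identifies length-$N$ tuples of period $d$ with primitive length-$d$ tuples in a weight-compatible way: the length-$N$ weight equals $w(s)^{N/d}$, where $w(s) \in \mathbb Z_\ell$ denotes the (cyclic-orbit-invariant) weight of the underlying primitive tuple. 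Summing over orbits yields the decomposition
\[
\tr(A^N) = \sum_{d \mid N} d \cdot \sum_{[s]} w(s)^{N/d},
\]
where $[s]$ ranges over orbits of primitive length-$d$ tuples.

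Specializing to $N = \ell^{n+1}$ and $N = \ell^n$, the possible values of $d$ are $\ell^i$ with $0 \le i \le n+1$ and $0 \le i \le n$ respectively, and the sets of primitive orbits at level $\ell^i$ are intrinsic, so the two expansions pair up orbit by orbit. Orbits of primitive period $\ell^{n+1}$ occur only in $\tr(A^{\ell^{n+1}})$ and already carry an explicit factor of $\ell^{n+1}$. For $0 \le i \le n$, each orbit contributes
\[
\ell^i \cdot \bigl(w(s)^{\ell^{n+1-i}} - w(s)^{\ell^{n-i}}\bigr)
\]
to the difference $\tr(A^{\ell^{n+1}}) - \tr(A^{\ell^n})$. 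The remaining ingredient is the scalar congruence
\[
\alpha^{\ell^m} \equiv \alpha^{\ell^{m-1}} \pmod{\ell^m}, \qquad \alpha \in \mathbb Z_\ell, \ m \ge 1,
\]
which I would prove in two cases, using Euler's theorem applied to $\phi(\ell^m) = \ell^{m-1}(\ell - 1)$ for units and the bound $\ell^{m-1} \ge m$ for non-units. Applying this with $m = n+1-i$ and $\alpha = w(s)$ shows that each displayed summand is divisible by $\ell^i \cdot \ell^{n+1-i} = \ell^{n+1}$, and summing finishes the proof.

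The main obstacle is bookkeeping: cleanly setting up the orbit decomposition and verifying that the weight of a length-$\ell^{n+1}$ walk of period $\ell^i$ really factors as the $\ell^{n+1-i}$-th power of the weight of its underlying primitive cycle, so that the scalar Fermat--Euler congruences can be applied orbit by orbit. Once that is in place the argument is a higher-precision refinement of the elementary proof that $\tr(A^\ell) \equiv \tr(A) \pmod{\ell}$, which only needs Fermat's little theorem applied to the diagonal contributions $a_{ii}^\ell$ together with the fact that every non-trivial cyclic orbit has size exactly $\ell$.
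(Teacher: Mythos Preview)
Your proof is correct and uses essentially the same cyclic-orbit decomposition of $\tr(A^N)$ over closed walks as the paper. The only difference in execution is that the paper first replaces $A$ by a matrix with non-negative integer entries (legitimate since the congruence is modulo $\ell^{n+1}$), turning the weighted sum into a literal path count and bypassing your scalar congruence $\alpha^{\ell^m}\equiv\alpha^{\ell^{m-1}}\pmod{\ell^m}$ via the direct bijection between length-$\ell^{n+1}$ paths of period dividing $\ell^n$ and all length-$\ell^n$ paths.
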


Arnold's conjecture goes back to before Arnold (J\"anichen \cite{janichen1921uber} and Schur \cite{schur1937arithmetische}). For a more recent expository survey and applications to topology and dynamics, see Zarelua \cite{Zarelua}. Arnold's conjecture has since been proven many times in the literature (for instance, see \cite{mazur2010generalizations}). We give a new proof\footnote{In the course of writing this paper, we found essentially the same proof by \href{https://rjlipton.wpcomstaging.com/2009/08/07/fermats-little-theorem-for-matrices/}{Qiaochu Yuan} in a blog post from 2009.} of a slightly refined statement since we will use a similar technique in proving our main theorem.

To keep notation simple, we state a special (yet non-trivial) case of our general $\ell$-adic convergence theorem.

\begin{theorem*}[Theorems \ref{thm: Main theorem, abstract, scalar},\ref{thm: Main theorem, abstract, general}]
Let $F(t)$ be a $r\times r$ matrix with entries in $\mathbb Z_\ell[t]$. Suppose that $q$ is a prime such that $q-1$ is divisible by $\ell$ but not $\ell^2$. For each $n\geq 1$, we define the matrix
\[A_n = \prod_{i=1}^{\ell^{n-1}}F(\zeta_{\ell^n}^{q^i})\]
with characteristic polynomial $p_n(x)$. Then, the limit $p_\infty(x) = \lim_n p_n(x)$ exists and we have the congruence
\[p_{n+1}(x) \equiv p_n(x) \pmod{\ell^n}.\]
\end{theorem*}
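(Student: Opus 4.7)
My plan has three steps: reduce the characteristic-polynomial congruence to congruences on power sums via Newton's identities; decompose $A_{n+1}$ cyclically as a product of $A_n$-like pieces; and close the estimate by combining cyclotomic-averaging identities with the Arnold--Zarelua theorem. For the first step, the coefficients of $p_n$ are polynomial expressions (with denominators dividing $r!$) in the power sums $s_k^{(n)} := \tr(A_n^k)$ for $1 \le k \le r$, so up to a fixed, $n$-independent $\ell$-adic precision loss from clearing denominators, it suffices to prove $s_k^{(n+1)} \equiv s_k^{(n)} \pmod{\md}$ for each $k \le r$.

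\textbf{Cyclic decomposition.} Setting $\omega := \zeta_{\ell^{n+1}}$, $\pi := \omega^\ell = \zeta_{\ell^n}$, and $\zeta := \omega^{\ell^n}$ (a primitive $\ell$-th root of unity), the hypothesis $v_\ell(q-1) = 1$ combined with lifting-the-exponent gives $q^{\ell^{n-1}} \equiv 1 + u\ell^n \pmod{\ell^{n+1}}$ for some $u \in \mathbb Z_\ell^\times$. Grouping the $\ell^n$ factors of $A_{n+1}$ by residue modulo $\ell^{n-1}$ then yields
\[A_{n+1} = \prod_{k=0}^{\ell-1} C_n(\omega\zeta^{ku}), \qquad C_n(t) := \prod_{j=1}^{\ell^{n-1}} F(t^{q^j}),\]
in a specific order, exhibiting $A_{n+1}$ as the ordered product of $C_n$ over the $\ell$ preimages of $\pi$ under $t \mapsto t^\ell$, while $A_n = C_n(\pi)$. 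The trace of any power of $A_{n+1}$ is invariant under $\omega \mapsto \omega\zeta$ (cyclic shift of the product corresponds to conjugation, preserving trace) and hence lies in $\mathbb Z_\ell[\zeta_\ell]$, which also contains $s_k^{(n)}$; the comparison thus takes place in this common ambient.

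\textbf{Main comparison and main obstacle.} The core step is to establish $s_k^{(n+1)} \equiv s_k^{(n)} \pmod{\md}$ in $\mathbb Z_\ell[\zeta_\ell]$. Expanding $(A_{n+1})^k$ as a sum of $\ell k$-fold matrix products of terms $C_n(\omega\zeta^{au})$ and repeatedly applying the cyclotomic-averaging identities $\sum_{j=0}^{\ell-1}\zeta^{jm} = \ell \cdot [\ell \mid m]$ and the Eisenstein relation $\prod_{j=1}^{\ell-1}(1-\zeta^j) = \ell$, I expect each nontrivial Galois averaging to contribute an integral factor of $\ell$, compensating the otherwise small individual valuations $v(\zeta^a - 1) = 1/(\ell-1)$. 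Combining these cyclotomic precision gains with Arnold--Zarelua applied to the orbit matrix whose $\ell^{n-1}$-th power recovers $A_n$ up to conjugation should promote the accumulated $\ell$'s to the target precision $\md$. The main obstacle will be this precision tracking: a naive pointwise expansion around $t = \pi$ gives only $\ell^{1/(\ell-1)}$ per factor, far short of $\md$, and closing the gap requires simultaneously exploiting the cyclotomic identities, the Eisenstein relation, iterated Arnold--Zarelua congruences, and careful non-commutative bookkeeping so that cross-terms arising from different orderings of the $C_n(\omega\zeta^{ku})$ do not degrade the final estimate.
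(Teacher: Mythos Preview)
Your decomposition $A_{n+1}=\prod_{m=0}^{\ell-1}C_n(\omega\zeta^{mu})$ is correct, and your reduction via Newton's identities is sound up to the bounded loss $v_\ell(r!)$ you acknowledge---but that loss already prevents you from reaching the sharp modulus $\ell^n$ when $\ell\le r$. The paper avoids this by using the exponential identity $\det(1-xB)=\exp\bigl(-\sum_{d\ge1}\tr(B^d)x^d/d\bigr)$ and proving the stronger congruence $\tr(A_{n+1}^d)\equiv\tr(A_n^d)\pmod{d\,\ell^n}$ for \emph{every} $d\ge1$, so that the denominator $d$ is exactly absorbed. (A minor side point: invariance under $\omega\mapsto\omega\zeta$ only places the trace in $\mathbb Z_\ell[\zeta_{\ell^n}]$, not $\mathbb Z_\ell[\zeta_\ell]$; the correct descent is via $\sigma_q$-invariance, which lands you in $\mathbb Z_\ell$.)

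The substantive gap is the one you yourself flag as the main obstacle, and your proposed remedy does not close it. Averaging over the $\ell$ preimages $\omega\zeta^{mu}$ of $\pi$ contributes at most one factor of $\ell$; your decomposition contains only this single layer of preimages, and there is no inductive mechanism in your outline relating $C_n$ to $C_{n-1}$ that would let you iterate to $\ell^n$. The paper's route is structurally different: it expands $F(t)=\sum_I F_I t^I$ monomially and writes $\tr A_{n+1}^d=\sum_J\tr(F_J)\,\zeta_{\ell^{n+1}}^{\lambda_J}$ as a sum over tuples $J$ of length $d\ell^n$, then groups tuples by their minimal period $K$ under cyclic shift. For an aperiodic $K$ of length $\rho$ (necessarily a multiple of $k_{n+1}=\ell^n$), the total contribution from its cyclic orbit is $\tr(F_J)\sum_{i=1}^{\rho}\zeta_{\ell^{n+1}}^{q^i w}$, and the key lemma is that such a sum over a full $q$-orbit is divisible by $\rho$: the $\ell^n$ distinct summands are exactly the roots of $z^{\ell^n}=\zeta_\ell^{w}$, hence sum to zero. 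This produces the full precision $\ell^n$ in a single stroke rather than by accumulation. Arnold--Zarelua enters only for the \emph{periodic} tuples, where it matches $\tr(F_K^r)$ against $\tr(F_K^{r/\ell})$ to cancel the contributions coming from $\tr A_{n+1}^d$ and $\tr A_n^d$. Your block decomposition exposes only the outermost factor-of-$\ell$ stage of the $q$-orbit; the paper's monomial expansion exposes the entire orbit of length $\ell^n$ at once, and it is that full-orbit cancellation that carries the argument.
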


We note that even in the simplest case where $r=1$, the above theorem is not obvious. 

\subsection{Some questions for future work}

We pose a few questions suggested by this work. 

\begin{question}
Our main theorem establishes the existence of $\ell$-adic limits $h_\infty(x), \tilde{h}_\infty(x) $ in the two cases. In some simple cases, the $h_n(x)$ are independent of $n$ for $n$ large enough and by the proof of the Weil conjectures, are known to in fact be polynomials over $\mathbb Z$ while a-priori $h_\infty(x)$ is only defined over $\mathbb Z_\ell$. 

Are the roots of $h_\infty(x)$ always transcendental numbers (except in the cases where $h_n$ is eventually constant)?
\end{question}

\begin{question}
Even if the roots of $h_\infty(x)$ are transcendental, is it possible to describe them using simple $\ell$-adic transcendental functions? 
\end{question}

\begin{question}
What information about the original morphism $f: C \to \mathbb P^1$ does $h_\infty(x)$ remember? In the classical Iwasawa theory set up, the limiting characteristic polynomials turn out to be equal to various $\ell$-adic L functions up to a unit (by the Main Conjecture of Iwasawa Theory), can we hope for something similar in this case?
\end{question}

\begin{question}
Let $(\Lambda,\sigma_q)$ be as in Section \ref{sec: Iwasawa theory} and $M$ a finite, free $\Lambda$ module with a $\sigma_q$ semi-linear endomorphism $\Phi: M \to M$. It might be possible and interesting to completely classify such endomorphisms $\Phi$ in the hope of a more conceptual proof of the main results. This question is reminiscent of Manin's classification of Dieudonne modules (\cite{manin1963theory}). Indeed, the $(M\otimes_\Lambda\Lambda_n(v),\Phi)$ form a "compatible" system of an "$\ell$-adic analogue of Dieudonne modules" over the "compatible" system of cyclotomic local rings with endomorphism $(\Lambda_n(v),\sigma_q)$ as $n$ varies - this final sentence is purely impressionistic!
\end{question}

\begin{question}
Let $Q:\mathbb Z_\ell^n \to \mathbb Z_\ell^n$ be a linear automorphism and for $v \in \mathbb Z_\ell^n$, let $k_n(v)$ be the smallest positive integer so that $Q^{k_n(v)}v \equiv v \pmod{\ell^n}$. Let $\lambda: \mathbb Z_\ell^n \to \mathbb Z_\ell$ be an arbitrary linear form. Does the sequence 
\[S_n(\lambda,v) \coloneqq \sum_{j=1}^{k_n(v)}\zeta_{\ell^n}^{\lambda(Q^{-j}v)}\] defined in Remark \ref{rmk: failure in general case, sum of rou} converge to $0$ as $n \to \infty $? If so, what is the rate of convergence and is it uniform as $v$ ranges over primitive vectors?
\end{question}

\textit{Acknowledgements:} I would like to thank my advisor Jordan Ellenberg for posing a question that led to this paper, feedback on the writing of this paper and many other useful discussions, Douglas Ulmer for many helpful discussions and useful feedback on the writing of the paper, John Yin for helping with some computer calculations.

I am also very grateful to the anonymous referee for helpful expository suggestions and spotting an error in an earlier version of the proof of Lemma \ref{lem: Invariant cohomology} and to Yifan Wei for helping me fix the error. 

\textit{Outline of the paper:} For expository reasons, the paper is not presented in strictly logical order. Section \ref{sec: Iwasawa theory} is independent of the rest of the paper and its main results (Theorems \ref{thm: Main theorem, abstract, scalar} and \ref{thm: Main theorem, abstract, general}) are used in proving our main geometric result (Theorem \ref{thm: main thm, geom}). The reader interested in the geometry and willing to take the $\ell$-adic analysis on faith can skip Section \ref{sec: Iwasawa theory}. The reader interested only in the $\ell$-adic convergence results can skip Section \ref{sec: dist of frob ev in towers of curves}.

\section{On the cohomology of a tower of curves}\label{sec: dist of frob ev in towers of curves}

In this section, we reduce Theorem \ref{thm: main thm, geom} to an abstract statement about the convergence of characteristic polynomials of a sequence of matrices.

We fix an odd prime $\ell$ and a finite field $\mathbb F_q$ with $q$ large enough to be specified soon. For a variety $X/\mathbb F_q$, the notation $H^i_{\text{\'et}}(\overline{X},\mathbb Z_\ell)$ denotes as usual the \'etale cohomology of the variety $X\times_{\mathbb F_q}\overline{\mathbb F}_q$ with $\mathbb Z_\ell$ coefficients. The Frobenius $\sigma_q$ acts on it through a linear automorphism.

\subsection{Two families of Iwasawa towers}

\begin{definition}\label{defn: cases}

Let $C/\mathbb F_q$ be a curve. We will be interested in the following two classes of towers:

\begin{itemize}
    \item \textit{Case A:} Given a non-constant map $f: C \to \mathbb P^1$, we can construct extensions $\pi_n: C_n \to C$ by the pull back diagram
        \[\begin{tikzcd}
        C_n \arrow[d, "\pi_n"] \arrow[r, "f_n"] & \mathbb P^1 \arrow[d, "t \to t^{\ell^n}"] \\
        C \arrow[r, "f"]                        & \mathbb P^1                              
        \end{tikzcd}\]
    The $C_n$ form an inverse system with an action by the group
    \[\Gamma_n = \mathbb Z/\ell^n\mathbb Z \rtimes \mathbb Z\]
    where we denote a generator for the first factor by $\theta$ (corresponding to $\theta(t) = \zeta_{\ell^n}t$) and a generator for the second factor by $\sigma_q$ corresponding to the Frobenius operation. They satisfy the commutation identity
    \[\sigma_q\theta = \theta^q\sigma_q.\]
    We require the $\pi_n: C_n \to C$ to be totally ramified over the preimage $f^{-1}\left(\{0,\infty\}\right)$ - for instance, this is satisfied if $f$ is unramified over $0,\infty$ or more generally, if the ramification indices of $f$ over $0,\infty$ are co-prime to $\ell$. This guarantees that the $C_n$ are geometrically irreducible.

    \item \textit{Case B:} Given an Abelian variety $A/\mathbb F_q$ of dimension $d$ and a map $f: C \to A$, we construct $\pi_n: C_n \to C$ by the pullback diagram
    \[
        \begin{tikzcd}
        C_n \arrow[d, "\pi_n"] \arrow[r, "f_n"] & A \arrow[d, "{[\ell^n]}"] \\
        C \arrow[r, "f"]                        & A                        
        \end{tikzcd}
    \]
    We require the $C_n$ to be geometrically irreducible, this is achieved for instance if the induced map $\pi^{\text{\'et}}_1(f):\pi^{\text{\'et}}_1(C)\to\pi^{\text{\'et}}_1(A)$ on the \'etale fundamental groups is surjective. In this case, the $C_n$ are acted upon by (with $b = 2d$)
    \[\Gamma_n = (\mathbb Z/\ell^n\mathbb Z)^{b} \rtimes \mathbb Z.\]
    The first factor can be identified
    with $A[\ell^n]$ and we denote a basis of it by $\alpha_1,\dots,\alpha_d ,\beta_1,\dots,\beta_d$ so that the Frobenius $\sigma_q$ (corresponding to the second factor) acts by a $b\times b$ matrix $Q$ as
    \[\sigma_q v = Qv\]
    for $v \in A[\ell^n].$ The congruence $Q\equiv \Id \pmod{\ell}$ is equivalent to $\sigma_q$ acting as the identity on $A[\ell]$. This can always be achieved by a finite extension of the base field $\mathbb F_q$ and we suppose that $q$ is large enough so that $Q \equiv \Id \pmod{\ell}$.\footnote{If $\ell = 2$, we would need $Q \equiv \Id \pmod{\ell^2}$.} Note that $1$ is not an eigenvalue of $Q$ since $\sigma_q-1: A \to A$ has finite degree equal to $A(\mathbb F_q)$. 
    

\end{itemize}

\end{definition}

\begin{remark}
These aren't the only cases our main theorem applies to and in fact, we can even generalize to higher dimensions. What is important is that our tower of varieties has an action by a pro-$\ell$ abelian group as above and that the growth in cohomology is "regular" in the tower so that as a module over the group algebra, the rank of the cohomology groups are constant. For example, we could take Fermat hypersurfaces of the form 
\[X_n : \sum_{j=0}^{d} x_j^{\ell^n} = 0 \subset \mathbb P^{b+1}\]
with action by $G_n = (\mathbb Z/\ell^n\mathbb Z)^b$. The only interesting cohomology group is in degree $i = b$, in which case it is a rank $1$ module over $\mathbb Z_\ell[G_n]$ (\cite[Theorem 6]{anderson1987torsion} for instance) and a straightforward variant of Theorem \ref{thm: structure of reps} shows that growth in cohomology is regular.

\end{remark}

\begin{remark}
Note that the automorphism groups $\Gamma_n$ aren't abelian but they are very close to being abelian, being the extension of an abelian group by the Frobenius action. Therefore one could view this as an example of skew-abelian Iwasawa theory.

\end{remark}

In the remainder of this subsection we prove some basic results about the cohomology of these towers (ignoring the Frobenius action initially).

\begin{lemma}\label{lem: direct factor}
For a finite extension of curves $f: X \to Y$, $H^1_{\text{\'et}}(\overline{Y},\mathbb Z_\ell)$ is a direct factor of $H^1_{\text{\'et}}(\overline{X},\mathbb Z_\ell)$.
\end{lemma}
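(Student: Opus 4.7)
The plan is to exhibit $f^{\ast}: H^1_{\text{\'et}}(\overline{Y}, \mathbb{Z}_\ell) \to H^1_{\text{\'et}}(\overline{X}, \mathbb{Z}_\ell)$ (or a natural modification of it) as a split injection of $\mathbb{Z}_\ell$-modules. The starting tool is the adjunction between pullback and pushforward: the finite surjective morphism $f$ induces both the pullback $f^{\ast}$ by contravariant functoriality and a pushforward (trace/Gysin map) $f_{\ast}: H^1_{\text{\'et}}(\overline{X}, \mathbb{Z}_\ell) \to H^1_{\text{\'et}}(\overline{Y}, \mathbb{Z}_\ell)$, with the key identity
\[
f_{\ast} \circ f^{\ast} = \deg(f) \cdot \operatorname{id}.
\]

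Whenever $\deg(f)$ is a unit in $\mathbb{Z}_\ell$, the operator $\tfrac{1}{\deg f}\, f^{\ast} \circ f_{\ast}$ is a projector of $H^1_{\text{\'et}}(\overline{X}, \mathbb{Z}_\ell)$ onto $f^{\ast} H^1_{\text{\'et}}(\overline{Y}, \mathbb{Z}_\ell)$, giving a split direct summand for free. The subtle situation is $\ell \mid \deg(f)$, which is exactly the one arising in the towers of interest (where $\deg \pi_n = \ell^n$ in Case A and a power of $\ell$ in Case B). For this I would pass to the Jacobian: $H^1_{\text{\'et}}(\overline{C}, \mathbb{Z}_\ell)$ is dually the $\ell$-adic Tate module of $\Jac(\overline{C})$, and the induced homomorphism $f^{\ast}: \Jac(\overline{Y}) \to \Jac(\overline{X})$ has finite kernel (contained in the $\deg(f)$-torsion), so its image $A \subseteq \Jac(\overline{X})$ is an abelian subvariety isogenous to $\Jac(\overline{Y})$. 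Poincar\'e's complete reducibility theorem then supplies a complementary abelian subvariety $B \subseteq \Jac(\overline{X})$ with $A + B = \Jac(\overline{X})$ and $A \cap B$ finite, so that $A \times B \to \Jac(\overline{X})$ is an isogeny.

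Applying $T_\ell$ to this isogeny yields an injection $T_\ell A \oplus T_\ell B \hookrightarrow T_\ell \Jac(\overline{X})$ with finite cokernel, and hence a direct sum decomposition after inverting $\ell$. To upgrade to an integral splitting, I would replace $T_\ell A$ by its saturation inside the free $\mathbb{Z}_\ell$-module $T_\ell \Jac(\overline{X})$: the saturation is automatically a direct $\mathbb{Z}_\ell$-summand (the quotient is torsion-free by construction) and, by a rank count, is free of rank $2 g_Y$, hence abstractly isomorphic to $T_\ell \Jac(\overline{Y})$. Dualising transfers the statement to $H^1_{\text{\'et}}$. Since every step (pullback, Poincar\'e reducibility, and saturation) is functorial over $\mathbb{F}_q$, the resulting summand is stable under $\sigma_q$, which will be crucial in the later sections of the paper.

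The main obstacle is precisely the passage from the rational to the integral splitting when $\ell \mid \deg(f)$: the image of $f^{\ast}$ itself need not be a $\mathbb{Z}_\ell$-direct summand, as one already sees for the multiplication-by-$\ell$ map on an elliptic curve, so the saturation argument together with the torsion-freeness of $\ell$-adic $H^1$ on a smooth projective curve is what repairs the naive attempt.
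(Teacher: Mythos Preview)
Your approach is genuinely different from the paper's, and in one respect more careful. The paper dualises to Tate modules and argues in one line that since $f_*\colon\Jac(X)\to\Jac(Y)$ is surjective (from $f_*f^*=[\deg f]$), the induced map $T_\ell(f_*)\colon T_\ell\Jac(X)\to T_\ell\Jac(Y)$ is surjective as well; a surjection onto a free $\mathbb Z_\ell$-module splits, and the kernel is automatically torsion-free as a submodule of a free module. No Poincar\'e reducibility, no saturation. Your route through a complementary abelian subvariety $B$ is therefore an unnecessary detour: once you know $T_\ell(f^*)$ is injective, saturating its image already yields a direct summand, and $B$ plays no further role.

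That said, your counterexample $f=[\ell]\colon E\to E$ is apt and in fact bites the paper's own argument: there $f_*=[\ell]$ on $\Jac(E)=E$, so $T_\ell(f_*)=\ell\cdot\mathrm{id}$ is \emph{not} surjective, and dually $f^*$ on $H^1$ is not a split injection. So the implication ``surjective on Jacobians $\Rightarrow$ surjective on Tate modules'' that the paper invokes fails in general, and your instinct to pass to the saturation is the right repair. The remaining soft spot in your write-up is that saturation produces a $\sigma_q$-stable summand of the correct rank, but you only obtain an \emph{abstract} $\mathbb Z_\ell$-isomorphism with $T_\ell\Jac(\overline Y)$; two $\sigma_q$-stable lattices in the same $\mathbb Q_\ell$-representation need not be integrally isomorphic as Frobenius modules. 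For the paper's downstream use --- characteristic polynomials of $\sigma_q$ on the quotients $M_n$ --- this distinction is harmless, since the $\mathbb Q_\ell$-representations agree, but it is worth flagging if you intend the lemma to assert a Frobenius-equivariant splitting.
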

\begin{proof}
Since $H^1_{\text{\'et}}(\overline Y,\mathbb Z_\ell)$ is dual to the Tate module $T_\ell(Y)$, it suffices to show the corresponding fact for the Tate modules of $X$ and $Y$, i.e., we need to show that the natural map $f: T_\ell(X) \to T_\ell(Y)$ is surjective and that the kernel is torsion free.

One easily checks the following composite map
\[\mathrm{Jac}(Y) \xrightarrow{f^*} \mathrm{Jac}(X) \xrightarrow{f_*} \mathrm{Jac}(Y)\]
is simply multiplication by the degree of $f$, for instance by using an isomorphism $\mathrm{Jac}(X) \cong \mathrm{Pic}(X)$ and computing the map explicitly in terms of divisors supported away from the ramification locus. This shows that the second map is surjective which in turn implies that the map on Tate modules $T_\ell(f): T_\ell(X) \to T_\ell(Y)$ is surjective.

Moreover, the kernel of $T_\ell(f)$ is torsion free since if $[P_n]_{n\geq 1} \in T_\ell(X)$ mapped to zero, then $P_n \in \mathrm{ker}(f)$ which would imply that $\deg(f) \geq \ell^n$ for all $n$ which is a contradiction.
\end{proof}

When the extension is generically Galois, we can say more.

\begin{lemma}\label{lem: Invariant cohomology}
Suppose $f: X \to Y$ is a generically Galois (branched) extension of (smooth, proper) curves with Galois group $G$. Then, $H^1_{\text{\'et}}(\overline Y,\mathbb Z_\ell)$ is exactly the submodule of $H^1_{\text{\'et}}(\overline X,\mathbb Z_\ell)$ fixed by the $G$ action.

\end{lemma}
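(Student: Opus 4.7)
The easy inclusion $H^1(\overline Y,\mathbb Z_\ell)\subseteq H^1(\overline X,\mathbb Z_\ell)^G$ is immediate: the $G$-action on $X$ fixes $Y$, so $f^*$ is $G$-equivariant with trivial target action. The plan for the reverse inclusion is to prove the rational analogue $H^1(\overline X,\mathbb Q_\ell)^G = H^1(\overline Y,\mathbb Q_\ell)$ and then descend to $\mathbb Z_\ell$ using the saturation granted by Lemma \ref{lem: direct factor}.

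For the rational version, I would restrict to the étale locus. Let $U\subseteq\overline Y$ be the open locus over which $f$ is étale and $V = f^{-1}(U)\subseteq \overline X$; then $V\to U$ is finite étale Galois with group $G$, and the averaging projector $\tfrac{1}{|G|}\sum_{g\in G}g^*$ gives $H^1(V,\mathbb Q_\ell)^G = H^1(U,\mathbb Q_\ell)$. Comparing the Gysin sequences for the complements $\overline X\setminus V$ and $\overline Y\setminus U$ (both finite sets of closed points) and taking $G$-invariants of the top row, the five lemma then forces $H^1(\overline X,\mathbb Q_\ell)^G = H^1(\overline Y,\mathbb Q_\ell)$. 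The only non-formal identification is at $H^0$ of the ramification divisor: since $G$ acts transitively on each fiber of $f$, the invariants of $\bigoplus_{p\in \overline X\setminus V}\mathbb Q_\ell(-1)$ contribute exactly one copy of $\mathbb Q_\ell(-1)$ per point of $\overline Y\setminus U$, matching $H^0(\overline Y\setminus U,\mathbb Q_\ell(-1))$.

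To descend to $\mathbb Z_\ell$, let $x\in H^1(\overline X,\mathbb Z_\ell)^G$. The rational statement places $x$ inside $H^1(\overline Y,\mathbb Q_\ell)$ viewed as a $\mathbb Q_\ell$-subspace of $H^1(\overline X,\mathbb Q_\ell)$. By Lemma \ref{lem: direct factor}, $H^1(\overline Y,\mathbb Z_\ell)$ is a direct summand of $H^1(\overline X,\mathbb Z_\ell)$, hence \emph{saturated}: any integral class lying in the $\mathbb Q_\ell$-span of $H^1(\overline Y,\mathbb Z_\ell)$ already belongs to $H^1(\overline Y,\mathbb Z_\ell)$. Therefore $x\in H^1(\overline Y,\mathbb Z_\ell)$, completing the proof.

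The main obstacle is the integral statement when $\ell\mid |G|$: no $G$-equivariant projector is available on $\mathbb Z_\ell$-coefficient cohomology, so one cannot argue directly with integral coefficients. Lemma \ref{lem: direct factor} is precisely the input that bridges the (classical) rational statement and the desired integral one. A secondary delicate point is the bookkeeping in the Gysin comparison at ramified points; I suspect this is where the error mentioned in the acknowledgements originally crept in.
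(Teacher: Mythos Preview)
Your proof is correct and takes a genuinely different route from the paper's. Both arguments restrict to the \'etale locus, but the paper handles that case \emph{integrally} via the Hochschild--Serre spectral sequence (using $H^1(G,\mathbb Z_\ell)=\mathrm{Hom}(G,\mathbb Z_\ell)=0$), whereas you work \emph{rationally} via the averaging projector. To pass from the open curve back to the proper one, the paper invokes a Frobenius weight argument: the cokernel of $H^1(\overline Y)\hookrightarrow H^1(\overline V)$ is pure of weight~$2$ while $H^1(\overline X)$ is pure of weight~$1$, so any class coming from $H^1(\overline X)^G$ that lands in $H^1(\overline V)$ must already lie in $H^1(\overline Y)$. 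You instead compare the two Gysin sequences, compute $G$-invariants on the boundary divisor directly, and then use the saturation furnished by Lemma~\ref{lem: direct factor} to descend from $\mathbb Q_\ell$ to $\mathbb Z_\ell$. Your approach avoids weight theory entirely (so it works over any algebraically closed base, not just $\overline{\mathbb F}_q$), at the cost of invoking Lemma~\ref{lem: direct factor}; the paper's argument is more self-contained integrally but leans on the arithmetic of the finite base field. One small point you glossed over: under purity, the pullback on $H^0$ of the deleted points sends a point $q\in \overline Y\setminus U$ to $\sum_{p\mapsto q} e_p\cdot p$ with ramification multiplicities $e_p$, not the naive diagonal; since the cover is Galois the $e_p$ are constant on each fibre, so over $\mathbb Q_\ell$ this is just a rescaling and your identification of $G$-invariants with $H^0(\overline Y\setminus U,\mathbb Q_\ell(-1))$ still goes through.
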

\begin{proof}
Let us first suppose that $X,Y$ are not necessarily proper but that $f:X\to Y$ is unramified. By the Hochschild-Serre spectral sequence, \[H^r(G,H^s_{\text{\'et}}(\overline X,\mathbb Z_\ell)) \Longrightarrow H^{r+s}_{\text{\'et}}(\overline Y,\mathbb Z_\ell).\]
If we want to let $r+s = 1$, then we have either $r=0,s=1$ or $r=1,s=0$. But, $H^0_{\text{\'et}}(\overline X,\mathbb Z_\ell) = \mathbb Z_\ell$ with the trivial $G$ action and therefore
\[H^1(G,H^0_{\text{\'et}}(\overline X,\mathbb Z_\ell)) = \mathrm{Hom}(G,\mathbb Z_\ell) = 0\]
since $G$ is torsion and $\mathbb Z_\ell$ is torsion free. This causes the spectral sequence to degenerate at the $(1,1)$ term and we have the required isomorphism
\[H^1_{\text{\'et}}(\overline Y,\mathbb Z_\ell) \cong H^0(G,H^1_{\text{\'et}}(\overline X,\mathbb Z_\ell)).\]
Now, for a general (branched) $f: X \to Y$, let $T \subset Y$ be the ramification divisor on $Y$ and $f^{-1}(T) = S\subset X$ its preimage in $X$ with $U = X-S,V = Y-T$. With this set-up, we have following commutative diagram
\[\begin{tikzcd}
	{H^1_{\text{\'et}}(\overline X,\mathbb Z_\ell)} & {H^s_{\text{\'et}}(\overline U,\mathbb Z_\ell)} \\
	{H^1_{\text{\'et}}(\overline Y,\mathbb Z_\ell)} & {H^1_{\text{\'et}}(\overline V,\mathbb Z_\ell)}
	\arrow[hook, from=1-1, to=1-2]
	\arrow[hook, from=2-1, to=1-1]
	\arrow[hook, from=2-2, to=1-2]
	\arrow[hook, from=2-1, to=2-2].
\end{tikzcd}\]

Note that the cokernels along the horizontal rows have weight $2$ (i.e., $\sigma_q$ acts by $q$ on the cokernel) as can be seen either from the excision long exact sequence or from the Lefschetz fixed point theorem for compactly supported cohomology along with Poincar\'e duality. On the other hand $H^1_{\text{\'et}}(\overline Y,\mathbb Z_\ell),H^1_{\text{\'et}}(\overline Y,\mathbb Z_\ell)$ are both of weight $1$ (by the Weil conjectures, for instance).

The above diagram is $G$-equivariant since $S,T$ are. Therefore, the $G$-invariants of $H^1_{\text{\'et}}(\overline X,\mathbb Z_\ell)$ are contained in $H^1_{\text{\'et}}(\overline V,\mathbb Z_\ell)$ but the above weight argument shows that it is in fact contained in $H^1_{\text{\'et}}(\overline Y,\mathbb Z_\ell)$ as required.
\end{proof}

Let us return to our specific towers above. 

\begin{definition}\label{defn: Gn}
In \textit{Case A}, let $G_n = \mathbb Z/\ell^n\mathbb Z$ with generator $\theta$ while in \textit{Case B}, let $G_n = (\mathbb Z/\ell^n\mathbb Z)^{b}$ with generators $\alpha_i,\beta_j$ as discussed before. We also define the group algebra $R_n = \mathbb Z_\ell[G_n]$.

\end{definition}

By Lemma \ref{lem: direct factor} and \ref{lem: Invariant cohomology}, $M_n = H^1_{\text{\'et}}(\overline{C}_n,\mathbb Z_\ell)/H^1_{\text{\'et}}(\overline{C},\mathbb Z_\ell)$ is a free $\mathbb Z_\ell$ module with an action of $R_n$ described by the following theorem with $g_0$ the genus of $C$.

\begin{theorem}\label{thm: structure of reps}

Let us define
\[r = \begin{cases}2g_0 + s - 2 &\text{ in \textit{Case A}}\\ 2g_0-2 &\text{ in \textit{Case B}}\end{cases}\]
where in \textit{Case A}, $s$ is the number of preimages of $0,\infty$ for the defining map $f: C \to \mathbb P^1$.

As $R_n$ modules, we have an exact sequence
\[0 \to \mathbb Z_\ell^r \to R_n^r \to M_n \to 0.\]
where $G_n$ acts trivially on the first term.
\end{theorem}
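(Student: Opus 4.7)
The strategy is to prove the exact sequence in three stages: a rank count via Riemann--Hurwitz, a determination of the $\mathbb{Q}_\ell[G_n]$-module structure of $M_n\otimes\mathbb{Q}_\ell$ using a Chevalley--Weil-type formula, and an integral lift coming from an explicit cellular chain complex / Fox calculus on the cover.

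For the ranks, apply Riemann--Hurwitz to $\pi_n: C_n \to C$. In Case A the cover has degree $\ell^n$ and is totally ramified of index $\ell^n$ at each of the $s$ points of $f^{-1}(\{0,\infty\})$, whence $2g_n - 2 = \ell^n(2g_0-2) + s(\ell^n-1)$ and $\operatorname{rank}_{\mathbb{Z}_\ell}(M_n) = 2g_n - 2g_0 = (\ell^n-1)r$. In Case B the cover is étale of degree $\ell^{nb}$, so $\operatorname{rank}(M_n) = (\ell^{nb}-1)r$. Both agree with $\operatorname{rank}(R_n^r/\mathbb{Z}_\ell^r) = (|G_n|-1)r$, since $R_n$ has $\mathbb{Z}_\ell$-rank $|G_n|$. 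Note also that $M_n$ is $\mathbb{Z}_\ell$-free because $H^1(\overline C,\mathbb{Z}_\ell)$ is a direct $\mathbb{Z}_\ell$-summand of $H^1(\overline{C_n},\mathbb{Z}_\ell)$ by Lemma \ref{lem: direct factor}.

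Next, decompose $H^1(\overline{C_n},\mathbb{Q}_\ell)$ into character isotypic components under $G_n$. By Lemma \ref{lem: Invariant cohomology} the trivial component is exactly $H^1(\overline C,\mathbb{Q}_\ell)$ (dimension $2g_0$). For each non-trivial character $\chi$, apply Chevalley--Weil: in Case B (étale cover) the formula uniformly gives $\dim V_\chi = 2g_0-2 = r$, while in Case A the total ramification at each of the $s$ branch points (whose local monodromy characters pair up symmetrically around $0$ and $\infty$) forces $\dim V_\chi = 2g_0 + s - 2 = r$ for every non-trivial $\chi$. Summing over characters recovers $2g_n$, consistent with Step~1, and identifies $M_n\otimes\mathbb{Q}_\ell \cong (R_n/\mathbb{Z}_\ell\cdot N)^r \otimes \mathbb{Q}_\ell$ as $\mathbb{Q}_\ell[G_n]$-modules, where $N = \sum_{g\in G_n} g$ is the norm element.

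To promote this to the integral statement, produce an explicit $R_n$-presentation of $M_n$ from a cellular chain complex of the cover. In Case B, the pro-$\ell$ completion of the surface-group presentation of $\pi_1^{\text{\'et}}(\overline C)$ gives a $G_n$-equivariant chain complex $0 \to R_n \to R_n^{2g_0} \to R_n \to 0$ computing $H^*(\overline{C_n},\mathbb{Z}_\ell)$; the middle cohomology is $H^1(\overline{C_n})$, and quotienting by the $G_n$-invariants $H^1(\overline C)$ gives the claimed presentation. In Case A, pass to the affine open $U = C \setminus f^{-1}(\{0,\infty\})$; the cover $U_n \to U$ is étale Galois with group $G_n$, and $\pi_1^{\text{\'et},(\ell)}(\overline U)$ is pro-$\ell$ free of rank $d = 2g_0 + s - 1$. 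Fox calculus then yields
\[ 0 \to \mathbb{Z}_\ell \to R_n \to R_n^d \to H^1(\overline{U_n},\mathbb{Z}_\ell) \to 0, \]
with $\mathbb{Z}_\ell$ embedded as $1 \mapsto N$. Combine this with the Gysin sequence $0 \to H^1(\overline{C_n}) \to H^1(\overline{U_n}) \to \mathbb{Z}_\ell(-1)^{s-1} \to 0$ (trivial $G_n$-action on the last term, since each branch point is $G_n$-fixed) to cut out $H^1(\overline{C_n})$ as an $R_n$-subquotient; modding by the $G_n$-invariants $H^1(\overline C)$ then yields the exact sequence for $M_n$.

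The hardest step will be this last bookkeeping: one must extract from the presentation of $H^1(\overline{C_n})$ the kernel of $R_n^r \twoheadrightarrow M_n$ in the \emph{precise} form of trivial $\mathbb{Z}_\ell^r$ embedded via the norm. The rational analysis of Step~2 determines this kernel only up to a $G_n$-stable $\mathbb{Z}_\ell$-lattice in a $\mathbb{Q}_\ell$-vector space of trivial isotypic type; pinning down the correct integral lattice requires either careful tracking through the Fox / cellular manipulations, or an independent argument that $M_n$ has no non-trivial $G_n$-fixed submodule, which would force the kernel to sit as the trivial-isotypic lattice inside $R_n^r$.
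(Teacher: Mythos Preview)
Your proposal would work but takes a substantially more elaborate route than the paper. After the same Riemann--Hurwitz count, the paper computes the character of $M_n$ directly via the Lefschetz fixed point theorem: any non-identity $g \in G_n$ fixes exactly the $s$ totally ramified points in Case~A (and nothing in Case~B), so $\tr(g \mid H^1_{\text{\'et}}(\overline{C_n})) = 2 - s$ (resp.\ $2$), giving $\tr(g \mid M_n) = -r$ uniformly for all $g \neq 1$; together with $\tr(\mathrm{id} \mid M_n) = r(|G_n|-1)$ this matches the character of $R_n^r/\mathbb{Z}_\ell^r$, and the paper concludes immediately. This Lefschetz step replaces your Chevalley--Weil argument and is cleaner, since it avoids per-character ramification bookkeeping entirely (your parenthetical about local monodromies ``pairing symmetrically around $0$ and $\infty$'' is not quite the mechanism --- every non-trivial $\chi$ has multiplicity $r$ simply because every $g \neq 1$ has the same fixed locus, hence the same trace). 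Your entire third stage (Fox calculus, cellular chains, Gysin bookkeeping) has no analogue in the paper, which passes from character equality straight to the claimed $R_n$-module isomorphism without further comment. Your instinct that characters alone need not determine a $\mathbb{Z}_\ell[G_n]$-lattice is sound, so your approach buys a genuinely rigorous integral exact sequence at the cost of considerably more machinery; the paper's buys brevity by being content with the character match.
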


As a preliminary to the above theorem, we use Riemann-Hurwitz to compute the dimensions of $M_n$.

\begin{lemma}
Let $g_n$ be the genus of $C_n$. $M_n$ is a free $\mathbb Z_\ell$ module of rank $2(g_n-g_0)$ and in \textit{Case A}, we have
\[\dim_{\mathbb Z_\ell} M_n = (\ell^n-1)(2g_0+s-2) \]
while in \textit{Case B}, we have
\[\dim_{\mathbb Z_\ell} M_n = (\ell^{bn}-1)(2g_0-2)\]
\end{lemma}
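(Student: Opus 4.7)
The plan is to reduce the lemma to two separate statements: (a) that $M_n$ is free of rank $2(g_n-g_0)$, and (b) explicit Riemann--Hurwitz calculations in each case. The first statement follows essentially for free from Lemma \ref{lem: direct factor}: since $H^1_{\text{\'et}}(\overline{C},\mathbb Z_\ell)$ is a direct factor of $H^1_{\text{\'et}}(\overline{C}_n,\mathbb Z_\ell)$, its quotient $M_n$ is a free $\mathbb{Z}_\ell$-module and its rank is the difference of the total ranks, which by standard \'etale cohomology for smooth projective curves equals $2g_n-2g_0 = 2(g_n-g_0)$. So the real content is computing $g_n$ in each case.

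For \textbf{Case A}, I would first check that $\pi_n:C_n\to C$ is a degree $\ell^n$ cover ramified precisely above $f^{-1}(\{0,\infty\})$, and that it is totally ramified above each of these $s$ points with ramification index $\ell^n$, using the hypothesis in Definition \ref{defn: cases} that guarantees total ramification over $0,\infty$ in the tower. Then the Riemann--Hurwitz formula
\[2g_n - 2 \;=\; \ell^n(2g_0-2) \;+\; s(\ell^n - 1)\]
rearranges immediately to $2(g_n-g_0) = (\ell^n-1)(2g_0+s-2)$.

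For \textbf{Case B}, I would begin by noting that $[\ell^n]:A\to A$ is finite \'etale (since $\ell$ is invertible in $\mathbb F_q$, a standing hypothesis), and therefore its pullback $\pi_n:C_n\to C$ is also finite \'etale of degree $\deg[\ell^n] = \ell^{2dn} = \ell^{bn}$. Applied to this unramified cover Riemann--Hurwitz reduces to
\[2g_n - 2 \;=\; \ell^{bn}(2g_0 - 2),\]
which rearranges to $2(g_n-g_0) = (\ell^{bn}-1)(2g_0-2)$. The geometric irreducibility of $C_n$ (assumed in the setup via surjectivity on \'etale fundamental groups) is needed to ensure $C_n$ really is a single connected curve so that the genus is well-defined in the expected way.

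The main obstacle is really just bookkeeping in Case A: verifying that the total ramification hypothesis over $\{0,\infty\}$ forces exactly $s$ ramification points on $C_n$ each with contribution $\ell^n - 1$ to the Riemann--Hurwitz sum, and that no new ramification is introduced elsewhere because $t\mapsto t^{\ell^n}$ on $\mathbb P^1$ is unramified away from $0,\infty$ and ramification is preserved under pullback. Once this is set up, both computations are mechanical.
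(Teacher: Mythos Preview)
Your proposal is correct and follows essentially the same approach as the paper: freeness of $M_n$ from Lemma~\ref{lem: direct factor}, followed by the same Riemann--Hurwitz computations in each case. Your added justifications (that $[\ell^n]$ is \'etale because $\ell$ is invertible in $\mathbb F_q$, and the bookkeeping for the ramification in Case~A) are welcome clarifications that the paper leaves implicit.
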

\begin{proof}
By Lemma \ref{lem: direct factor} and \ref{lem: Invariant cohomology}, $M_n$ is a free $\mathbb Z_\ell$ module. It remains to compute its $\mathbb Z_\ell$ rank ($= 2(g_n-g_0)$). In \textit{Case A}, let$S_0,S_\infty \subset C(\overline{\mathbb F}_q)$ be the preimages of $0,\infty$ under $f$ so that $s = |S_0|+|S_\infty|$. Note that $\pi_n$ is only ramified over $S_0,S_\infty$ and by assumption, it is totally ramified to order $\ell^n$ over these points. By Riemann-Hurwitz, we then have
\[2g_n - 2 = \ell^n(2g_0-2) + s(\ell^n-1) \implies 2(g_n-g_0) = (\ell^n-1)(2g_0 + s-2).\]

In \textit{Case B}, $\pi_n$ is unramified and of degree $\ell^{bn}$ and therefore, we simply have
\[2g_n - 2 = \ell^{bn}(2g_0-2) \implies 2(g_n-g_0) = (\ell^{bn}-1)(2g_0-2).\]
\end{proof}

We finish the proof of Theorem \ref{thm: structure of reps} by using the Lefschetz fixed point theorem to compute the character of $M$ in terms of fixed points.

\begin{proof}

In \textit{Case A}, let $g\in G_n$ be non trivial. Since $g$ is not the identity, the only points it fixes on $C_n$ are the points lying over $0,\infty$ under the map $C_n \to \mathbb P^1$. In the notation of the previous lemma, there are $s$ such points in total and the local index at each point is $+1$. Moreover, $g$ acts trivially on the degree $0,2$ cohomology groups. Therefore, by the Lefschetz fixed point formula
\[\tr(g|H^1_{\text{\'et}}(\overline{C}_n,\mathbb Z_\ell)) = 2 - s\]
and since $G$ acts trivially on $C_0$, 
\[\tr(g|H^1_{\text{\'et}}(\overline{C}_n,\mathbb Z_\ell)) - \tr(g|H^1_{\text{\'et}}(\overline{C},\mathbb Z_\ell)) = -(2g_0+s-2) = -r.\]
On the other hand, the identity $\mathrm{id} \in G_n$ of course acts trivially so that
\[\tr(\mathrm{id}|H^1_{\text{\'et}}(\overline{C}_n,\mathbb Z_\ell)) - \tr(\mathrm{id}|H^1_{\text{\'et}}(\overline{C},\mathbb Z_\ell)) = 2(g_n-g_0) = r(\ell^n-1)\]
where the final equality is by the previous lemma.

In \textit{Case B}, any $g \neq \mathrm{id} \in G_n$ acts on the abelian variety $A$ by a non trivial translation and hence has no fixed points on either $C_n$ or $A$. As before, by the Lefschetz fixed point theorem
\[\tr(g|H^1_{\text{\'et}}(\overline{C}_n,\mathbb Z_\ell)) - \tr(g|H^1_{\text{\'et}}(\overline{C},\mathbb Z_\ell)) = 2-2g_0 = -r.\]
The identity element has trace equal to
\[\dim H^1_{\text{\'et}}(\overline{C}_n,\mathbb Z_\ell) - \dim H^1_{\text{\'et}}(\overline{C},\mathbb Z_\ell) = 2(g_n-g_0) = r(\ell^{bn}-1).\]
If we then examine the exact sequence
\[0 \to \mathbb Z_\ell^r \to R_n^r \to X \to 0,\]
we see that $X$ has the character we computed above in both cases for $M_n$ proving that $X \cong M_n$ as $G_n$ representations.

\end{proof}

\begin{remark}\label{rmk: characters of Mn}
As an immediate corollary of the above theorem, we notice that in both \textit{Case A} and \textit{B}, for every non trivial character $\chi: G_n \to \overline{\mathbb Z}^\times$, the corresponding eigenspace $M_{n}(\chi)$ of $M_n\otimes \mathbb Z_\ell[\zeta_{\ell^n}]$ is of dimension $2g_0+s-2$ and $2g_0-2$ in the two cases respectively. In particular it is independent of $n$ and we call the characters appearing in $P_n = M_n/M_{n-1}$ "new" or "primitive" characters of level $n$.

We fix a set of generators $t_1,\dots,t_b$ for $G_n \cong (\mathbb Z/\ell^n\mathbb Z)^b$ and identify characters $\chi$ of $G_n$ by vectors $v = (v_1,\dots,v_b) \in (\mathbb Z/\ell^n\mathbb Z)^b$ by defining \(\chi_v(t_i) = t_i^{v_i}.\) Under this identification, primitive characters correspond exactly to primitive vectors as defined below in Definition \ref{defn: primitive vectors}. We denote the eigenspace of $\chi_v$ by $M_n(v)$.

\end{remark}

The exact sequence in the above theorem implies that $M_n$ is not a free $R_n$ module but nevertheless, the inverse limit $M_\infty \coloneqq \lim_n M_n$ is a free module over $\Lambda = \mathbb Z_\ell[[T_1,\dots,T_b]] = \lim_n R_n$.

\begin{lemma}\label{lem: limiting M_n}
Let $\theta_1,\dots,\theta_b$ be the generators of $G_n = \left(\mathbb Z/\ell^n\mathbb Z\right)^b$ as above. Then the projective limit $M_\infty \coloneqq \varprojlim_n M_n$ is a free module of rank $r$ over $\Lambda = \mathbb Z_\ell[\![T_1,\dots,T_b]\!]$. The Frobenius $\sigma_q$ acts \emph{semi}-linearly on $M_\infty$, i.e., $\sigma_q$ is $\mathbb Z_\ell$ linear and satisfies
\[\sigma_q\circ(1+T_i) = {\sigma_q}(1+T_i)\circ\sigma_q\]
where we identify $1+T_i$ with $\theta_i$ so that $\sigma_q$ acts on $1+T_i$ through its action on $\varprojlim_n G_n$.
\end{lemma}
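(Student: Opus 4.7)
My plan is to obtain $M_\infty \cong \Lambda^r$ by passing the exact sequence of Theorem~\ref{thm: structure of reps} to the inverse limit and showing that the trivial submodule $\mathbb Z_\ell^r$ disappears in the limit. First I would check that the sequences $0 \to \mathbb Z_\ell^r \to R_n^r \to M_n \to 0$ assemble into a compatible inverse system. On the group-algebra side, the natural surjection $R_{n+1}\twoheadrightarrow R_n$ induced by $G_{n+1}\twoheadrightarrow G_n$ sends the norm element $N_{G_{n+1}} = \sum_{g\in G_{n+1}} g$ to $\ell^b \cdot N_{G_n}$, since each element of $G_n$ has $\ell^b$ preimages. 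Under the identification of the trivial submodule with $\mathbb Z_\ell \cdot N_{G_n}$, the transition on the $\mathbb Z_\ell^r$-terms is therefore multiplication by $\ell^b$. The induced quotient map $M_{n+1}\to M_n$ then has to be matched with the geometric transition coming from pushforward on Jacobians; this is forced by the fact that both maps are $R_{n+1}$-equivariant and commute with Frobenius.

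The next step applies the six-term $\varprojlim$/$\varprojlim^1$ exact sequence to the inverse system of short exact sequences. The middle column $\{R_n^r\}$ has surjective transitions, so it is Mittag-Leffler with $\varprojlim R_n^r = \Lambda^r$ and $\varprojlim^1 R_n^r = 0$. For the left column $(\mathbb Z_\ell,\,\cdot\ell^b)$, the inverse limit vanishes because any element would be infinitely $\ell^b$-divisible, and $\varprojlim^1$ also vanishes: the defining equation $y_n = x_n - \ell^b x_{n+1}$ is solved by the $\ell$-adically convergent series $x_n = \sum_{k\geq 0} \ell^{bk}\, y_{n+k}$. The long exact sequence therefore collapses to an isomorphism $\Lambda^r \xrightarrow{\sim} M_\infty$, which is the desired freeness of rank $r$ (and gives as a bonus that $\varprojlim^1 M_n = 0$).

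For the semi-linearity of $\sigma_q$: the Frobenius acts on each $C_n$ compatibly with the transition maps (these come from morphisms of $\mathbb F_q$-schemes), hence on $M_\infty$. Its conjugation action on $\Gamma_n = G_n\rtimes\mathbb Z$ gives a canonical action on each $G_n$, and the inverse limit is a continuous action on $G_\infty=\varprojlim_n G_n\cong\mathbb Z_\ell^b$ which induces a continuous ring automorphism of $\Lambda$. The commutation relation $\sigma_q\theta_i = \sigma_q(\theta_i)\,\sigma_q$ built into Definition~\ref{defn: cases}, applied inside $M_n$ and passed to the limit via the identification $\theta_i = 1 + T_i$, is precisely the stated formula $\sigma_q\circ(1+T_i) = \sigma_q(1+T_i)\circ\sigma_q$. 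The main obstacle I anticipate is the compatibility check matching the algebraic transition on $M_n$ (from the exact sequence) with the geometric one from the tower of Jacobians; once that is verified, everything is driven by the vanishing of both $\varprojlim$ and $\varprojlim^1$ of the system $(\mathbb Z_\ell,\,\cdot\ell^b)$.
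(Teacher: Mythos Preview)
Your approach is correct and takes a genuinely different route from the paper. The paper works with the explicit presentation of $M_n$ coming from Theorem~\ref{thm: structure of reps}, writes down the map $\Lambda^r \to M_\infty$ by $T_i \mapsto \theta_i - 1$ at each level, and checks injectivity by observing that the generators of $\ker(\Lambda \to M_n)$---namely $(1+T_i)^{\ell^n}-1$ and $\prod_i \frac{(1+T_i)^{\ell^n}-1}{T_i}$---tend to $0$ in the $(\ell,T_1,\dots,T_b)$-adic topology; surjectivity is argued from generation at each finite level. You instead run the six-term $\varprojlim/\varprojlim^1$ sequence on $0 \to \mathbb Z_\ell^r \to R_n^r \to M_n \to 0$ and eliminate the trivial term by showing both $\varprojlim$ and $\varprojlim^1$ of $(\mathbb Z_\ell,\cdot\ell^b)$ vanish, the latter via the $\ell$-adically convergent series $x_n = \sum_{k\geq 0} \ell^{bk} y_{n+k}$. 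Your method is cleaner homological algebra and yields the bonus $\varprojlim^1 M_n = 0$; the paper's is more concrete and directly produces the explicit $\Lambda$-basis that is used downstream in the proof of Theorem~\ref{thm: main thm, geom} to write $\sigma_q$ as a matrix $F(t_1,\dots,t_b)$. The compatibility issue you flag is real---your ``forced by equivariance and Frobenius'' sketch is not quite a proof, since Theorem~\ref{thm: structure of reps} only supplies abstract $R_n$-module isomorphisms level by level---but the paper's argument is equally informal on this point, tacitly assuming the explicit presentations are compatible under the natural surjections $R_{n+1}\twoheadrightarrow R_n$.
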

\begin{proof}
By the above theorem, we have the following identification as $\mathbb Z_\ell[G_n]$-modules
\[M_n \cong \left(\frac{\mathbb Z_\ell[\theta_1,\dots,\theta_b]}{(\theta_1^{\ell^n} = 1,\dots,\theta_b^{\ell^n} = 1, \prod_{i=1}^b\left(\sum_{j=0}^{\ell^n-1}\theta_i^{j}\right)}\right)^r\]
since $\prod_{i=1}^b\left(\sum_{j=0}^{\ell^n-1}\theta_i^{j}\right)$ generates the unique $1$-dimensional $\mathbb Z_\ell$ submodule of $\mathbb Z_\ell[G_n]$ with trivial $G_n$ action. Using this explicit presentation, we define a map
\[\Lambda^r = \left(\mathbb Z_\ell[\![T_1,\dots,T_b]\!]\right)^r \to M_\infty\]
by mapping, for each factor, the $T_i \to \theta_i-1$ in each term in the projective limit. We will prove that this map is an isomorphism. Since the map is defined on each factor, we can assume henceforth that $r=1$. The kernel of the induced map to $M_n$ is generated by the elements
\[(1+T_i)^{\ell^n}-1 = \sum_{j=1}^{\ell^n}\binom{\ell^n}{j}T_i^j \text{   for    } i=1,\dots, b\]
and
\[\prod_{i=1}^b\left(\frac{(1+T_i)^{\ell^n}-1}{T_i}\right) = \prod_{i=1}^b\left(\sum_{j=1}^{\ell^n}\binom{\ell^n}{j}T_i^j\right).\]
As $n \to \infty$, these elements tend to $0$ in the $(\ell,T_1,\dots,T_b)$-adic topology of $\Lambda$ so that the map $\Lambda \to M_\infty$ is injective. On the other hand, surjectivity is also clear since the $\theta_i$ generate $G_n$, and consequently the $\theta_i-1$ generate $\mathbb Z[G_n]$. The Frobenius action is induced through this morphism, thus completing the proof.
\end{proof}

\subsection{On the distribution of Frobenius eigenvalues in towers of curves}

In this subsection, we prove that the characteristic polynomials $f_n(x)$ of $\sigma_q$ on $H^1_{\text{\'et}}(\overline{C}_n,\mathbb Z_\ell)$ in our two cases satisfy some striking congruences. We will treat the cases uniformly by letting $Q =q, b= 1$ in Case A.

\begin{definition}\label{defn: primitive vectors}
For $R$ a discretely valued ring (DVR) or a quotient of a DVR, we call $v \in R^b$ primitive if at least one of its co-ordinates is a unit. We denote the space of primitive vectors by $\PP(R^b)$.
\end{definition}

For a primitive vector $v \in H^1_{\text{\'et}}(\overline{C}_n,\mathbb Z_\ell)$, we define $k_n(v)$ to be the smallest postive integer such that $Q^{k_n(v)}v \equiv v \pmod{\ell^n}$. We define $k_n$ to be the minimum of $k_n(v)$ as $v$ ranges over primitive vectors. Lemma \ref{lem: order of Q} shows the existence of a positive integer $\beta_v$ such that $k_n(v) = \ell^{n-\beta_v}$ for $n \geq \beta_v$. Moreover, $n_0 = \max_{v \text{ primitive}}\beta_v$ is finite so that $k_n = \ell^{n-n_0}$ for $n\geq n_0$.

\begin{theorem}\label{thm: main thm, geom}
Let $C_n$ be as in Case A or B of Definition \ref{defn: cases} and 
\[f_n(x) = \det\left(1 - \sigma_qx|M_n\right)\]
be the characteristic polynomial of the Frobenius $\sigma_q$ acting on $M_n = H^1_{\text{\'et}}(\overline{C}_n,\mathbb Z_\ell)/H^1_{\text{\'et}}(\overline{C},\mathbb Z_\ell)$. It satisfies the following properties:
\begin{enumerate}
    \item We have a factorization into monic polynomials:
    \begin{equation}\label{eqn: main thm 1, Case A}
        f_m(x) = \prod_{n\leq m}g_n(x)
    \end{equation} 
    where the $g_n$ are independent of $m$.
    \item There exist polynomials $h_n(y), \tilde{h}_n(y)$ such that, \textbf{in Case A}
    \begin{equation}\label{eqn: main thm 2, Case A}
        g_n(x) = h_n(x^{k_n}).
    \end{equation}
    while \textbf{in Case B}
    \begin{equation}\label{eqn: main thm 2, Case B}
        g_n(x) = \tilde{h}_n(x^{k_n}).
    \end{equation}
    \item 
    \textbf{In Case A:} For $n \geq n_0$ (Lemma \ref{lem: order of Q}), we have the $\ell$-adic convergence
    \begin{equation}\label{eqn: main thm 3, Case A}
        h_{n+1}(y) \equiv h_n(y) \pmod{\md}.
    \end{equation}
    In particular, the following $\ell$-adic limit exists in $\mathbb Z_\ell[y]$:
    \[h_\infty(y) = \lim_{n\to\infty}h_n(y).\]
    \textbf{In Case B:} For $n \geq n_0$, we have the congruence
    \begin{equation}\label{eqn: main thm 3, Case B}
        \tilde{h}_{n+1}(y) \equiv \left(\tilde{h}_n(y)\right)^{\ell^{(b-1)}} \pmod{\md}.
    \end{equation}
    In particular, the following $\ell$-adic limit exists in $\mathbb Z_\ell[y]$ with $\exp,\log$ defined formally as power series:
    \[\tilde{h}_\infty(y) = \exp\left(\lim_{n\to\infty}\frac{1}{\ell^{(n-n_0)(b-1)}}\log(\tilde{h}_{n}(y))\right).\]
\end{enumerate}
\end{theorem}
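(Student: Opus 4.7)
The plan is to handle parts (1)--(2) by a character-eigenspace decomposition of $M_n$, and to reduce part (3) to the abstract Theorems \ref{thm: Main theorem, abstract, scalar} and \ref{thm: Main theorem, abstract, general} via the $\Lambda$-module description of $M_\infty$ provided by Lemma \ref{lem: limiting M_n}, combined with an orbit-counting argument that distinguishes Cases A and B.

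\textbf{Parts (1) and (2).} After extending scalars to $\overline{\mathbb Q}_\ell$ I decompose $M_n = \bigoplus_\chi M_n(\chi)$ into $G_n$-eigenspaces. By Remark \ref{rmk: characters of Mn}, each non-trivial eigenspace has dimension $r$, and since $\sigma_q$ sends $M_n(\chi)$ to $M_n(\chi\circ\sigma_q^{-1})$, it permutes eigenspaces without fixing them individually. Grouping characters by their level (the minimal $n$ for which $\chi$ factors through $G_n$) and observing that $\sigma_q$ preserves level yields the factorization $f_m = \prod_{n \le m} g_n$ of (1). For a primitive $v \in (\mathbb Z/\ell^n\mathbb Z)^b$, the Frobenius acts block-cyclically on $\bigoplus_j M_n(Q^j v)$ with return map $A_{O_v} := \sigma_q^{k_n(v)}|_{M_n(v)}$, so the characteristic polynomial on this block equals $\det(I - A_{O_v}\, x^{k_n(v)})$, a polynomial in $x^{k_n(v)}$. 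The product over all primitive $\sigma_q$-orbits is invariant under $\Gal(\mathbb Q_\ell(\zeta_{\ell^n})/\mathbb Q_\ell)$ (which permutes orbits, since multiplication by a scalar commutes with multiplication by $Q$), and so descends to polynomials $h_n, \tilde h_n \in \mathbb Z_\ell[y]$ satisfying (2).

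\textbf{Part (3): matrix setup.} Using Lemma \ref{lem: limiting M_n}, fix a $\Lambda$-basis $e_1,\dots,e_r$ of $M_\infty$ and write the semi-linear Frobenius as a matrix $F(T) \in M_r(\Lambda)$ via $\sigma_q(e_j) = \sum_i F_{ij}(T)\,e_i$. Specialization at $\chi_v$ sends $1+T_i \mapsto \zeta_{\ell^n}^{v_i}$, and iterating the semi-linearity $\sigma_q\cdot(1+T_i) = \sigma_q(1+T_i)\cdot\sigma_q$ gives
\[
A_{O_v} \;=\; \prod_{j=0}^{k_n(v)-1} F\bigl(\zeta_{\ell^n}^{Q^j v}\bigr),
\]
precisely the kind of matrix appearing in Theorems \ref{thm: Main theorem, abstract, scalar} and \ref{thm: Main theorem, abstract, general}. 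Approximating $F$ by a polynomial matrix modulo a sufficiently high power of the maximal ideal of $\Lambda$ (harmless for bounded level) puts us in the polynomial setting of the abstract theorem, which then gives a congruence modulo $\ell^n$ between $\det(I - A_{O_v}\,y)$ at level $n$ and its counterpart at level $n+1$.

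\textbf{Orbit count and conclusion.} In Case A ($b=1$), the number of $\sigma_q$-orbits on primitive level-$n$ characters is $\phi(\ell^n)/k_n = \ell^{n_0-1}(\ell-1)$, independent of $n$ for $n \ge n_0$, and reduction modulo $\ell^n$ is a bijection between level-$(n+1)$ and level-$n$ orbits; applying the abstract theorem orbit by orbit and multiplying gives $h_{n+1}(y) \equiv h_n(y) \pmod{\md}$. In Case B ($b \ge 2$), each primitive level-$n$ vector has $\ell^b$ primitive lifts to level $n+1$ but orbit sizes grow only by a factor of $\ell$, so each level-$n$ orbit is covered by exactly $\ell^{b-1}$ level-$(n+1)$ orbits, each contributing a polynomial congruent to the old one modulo $\ell^n$; taking the product yields $\tilde h_{n+1}(y) \equiv \tilde h_n(y)^{\ell^{b-1}} \pmod{\md}$, whence the existence of the $\exp/\log$ limit follows by formal manipulation.

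The main obstacle is the second step: verifying that the semi-linear iteration genuinely produces the products of matrix values hypothesized by the abstract theorem (tracking the ordering of factors and the twist on $\Lambda$), and checking that a primitive orbit at level $n+1$ reducing to a primitive orbit at level $n$ corresponds exactly to the ``lift'' hypothesis the abstract theorem requires on $F$. A secondary point is that in (2) one must check that $\Gal(\mathbb Q_\ell(\zeta_{\ell^n})/\mathbb Q_\ell)$ permutes $\sigma_q$-orbits (rather than merely individual characters) so that the total product is indeed a polynomial over $\mathbb Z_\ell$.
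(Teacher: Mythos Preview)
Your treatment of Parts (1)--(2) and the matrix setup for Part (3) is essentially the paper's approach: decompose into character eigenspaces, use the block-cyclic action to reduce to the return map $\sigma_q^{k_n(v)}|_{M_n(v)}$, and identify this with the product $A_n(v)=\prod_j F(\zeta_{\ell^n}^{Q^{-j}v})$ coming from the free $\Lambda$-module structure of $M_\infty$.

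The genuine gap is in your Case B conclusion. You argue orbit by orbit: each of the $\ell^{b-1}$ level-$(n{+}1)$ orbits lying over a given level-$n$ orbit ``contributes a polynomial congruent to the old one modulo $\ell^n$'', and then multiply. That per-orbit congruence is exactly Theorem~\ref{thm: Main theorem, abstract, scalar}, which requires $Q$ to be \emph{scalar}. In Case B the matrix $Q$ is the Frobenius on $T_\ell A$, which is not scalar in general, and the paper's Remark~\ref{rmk: failure of convergence, example} gives an explicit counterexample showing $p_{n+1,v}\not\equiv p_{n,v}\pmod{k_{n+1}}$ already for $r=1$. So the orbit-by-orbit strategy breaks down. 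The paper instead observes that the product $\tilde h_n(y)=\prod_{v}p_{n,v}(y^{k_n(v)/k_n})$ is \emph{by definition} the polynomial $r_n(y)$ of Theorem~\ref{thm: Main theorem, abstract, general}, and applies that theorem once to the whole product; the proof of Theorem~\ref{thm: Main theorem, abstract, general} crucially exploits the sum over \emph{all} primitive $v$ (Lemma~\ref{lem: sum over all vectors is divisible}) to get divisibility that is simply unavailable on a single orbit.

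A smaller version of the same issue already appears in your Case A argument: applying Theorem~\ref{thm: Main theorem, abstract, scalar} orbit by orbit and multiplying only yields $h_{n+1}\equiv h_n\pmod{k_{n+1}}$, which is weaker than the stated $\pmod{\ell^n}$ whenever $n_0>1$ (the paper notes this explicitly in the remark after Theorem~\ref{thm: Main theorem, abstract, general}). The fix is the same: treat Case A as the $b=1$ instance of Theorem~\ref{thm: Main theorem, abstract, general} and invoke it on the full product rather than per orbit.
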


\begin{remark}
The Frobenius $\sigma_q$ is known to act semi-simply on the \'etale cohomology of a curve\footnote{\href{https://mathoverflow.net/a/104105/58001}{Semi-simplicity for abelian varieties}} and conjectured to act semi-simply with rational coefficients on any variety over $\mathbb F_q$. While the following proof simplifies slightly if we use the semi-simplicity of $\sigma_q$ on $M_n$, we do not assume this so that the following proof can be adapted more easily to cases where semi-simplicity is not known.
\end{remark}

\begin{proof}
\textbf{Part $1$}, i.e., Equation \ref{eqn: main thm 1, Case A} is an immediate consequence of Lemma \ref{lem: direct factor} once we define $g_n(x)$ to be the characteristic polynomial of $\sigma_q$ on $P_n = M_n/M_{n-1}$.

\textbf{To prove Part $2$}, i.e., Equations \ref{eqn: main thm 2, Case A} and \ref{eqn: main thm 2, Case B}, we treat the two cases simultaneously by taking $b=1,Q = q$ in Case A. Recall the notation that, for $v \in \mathbb Z_\ell^b$, $M_n(v)$ is the eigenspace of $G_n$ for the character $\chi_v(t_i) = t_i^{v_i}$. The eigenspaces $M_n(v)$ get permuted by $\sigma_q$ in the following manner:
\[\sigma_q: M_n(v) \to M_n(Q^{-1}v)\]
and therefore $\sigma_q^{k_n(v)}$ is an automorphism of $M_n(v)$. We will prove that a Jordan block of $\sigma_q^{k_n(v)}$ acting on $M_n(v) \subset P_n$ (with eigenvalue $\lambda \neq 0$) corresponds to $k_n(v)$ distinct Jordan blocks of $\sigma_q$ acting on $P_n$ (with eigenvalues $\mu^{1/k_n(v)}$). Since this claim is independent of passing to an extension, we replace $P_n$ by $P_n\otimes_{\mathbb Z_\ell}\overline{Q}_\ell$.

To that end, let $m_1,\dots,m_s \in M_n(v)$ be some generalized eigenvectors of $\sigma_q^{k_n}$ corresponding to a pure Jordan block of eigenvalue $\lambda$ (possibly defined over an extension $\mathbb Z_\ell$) so that
\[\sigma_q^{k_n}(m_{i+1}) = \lambda m_{i+1} + m_{i}\]
(with the convention that $m_0 = 0$.) We will first show that the eigenvector $m_1$ for $\sigma_q^{k_n(v)}$ corresponds to $k_n(v)$ distinct eigenvectors for $\sigma_q$. For $m_i \in M_n(v)$, let $m_{i,j} = \sigma_q^{j-1}(m_i)$ for $j=1,\dots,k_n(v)$. Note that \[\sigma_{q}(m_{i+1,k_n(v)}) = \sigma_q^{k_n(v)}(m_{i+1}) = \lambda m_{i+1,1} + m_{i,1}.\]
For each $\mu$ a $k_n$ root of $\lambda$, \(n_\mu = \sum_{j=1}^{k_n(v)}\mu^{-j}m_{1,j}\)
is an eigenvector of $\sigma_q$. Indeed, we have
\[\sigma_q(n_\mu) = \sum_{j=1}^{k_n(v)-1}\mu^{-j}m_{1,j+1} + \lambda\mu^{-k_n(v)} m_{1,1}  = \mu n_\mu .\]
Therefore, the $n_\mu$ are each an eigenvector of $\sigma_q$  and the subspace $N = \mathrm{span}(n_\mu : \mu^{k_n(v)} = \lambda)$ is stable under $\sigma_q$ and contains 
\[m_{1,j} = \frac{1}{k_n(v)}\sum_{\mu^{k_n(v)} = \lambda} \mu^jn_{\mu} \ \ \ \ \ \ \text{ for } j=1,\dots,k_n(v).\]
Passing to the quotient $P_n/N$ therefore corresponds to replacing the $m_1,\dots,m_s$ by $m_2,\dots,m_s$ (with $m_2$ now an eigenvalue of $\sigma_q^{k_n(v)}$) and we continue inductively to show that each $m_i$ corresponds to $k_n(v)$ distinct generalized eigenvectors $n_{i,\mu}$ with eigenvalue $\mu$.

Let $g_{n,v}(x) = \det(\Id - \sigma_qx)$ be the characteristic polynomial of $\sigma_q$ on $N_n(v) = \bigoplus_{i=0}^{k_n(v)-1}M_n(Q^i(v))$. This module has dimension exactly $k_n(v)$ times the dimension of $M_n(v)$ and since for each generalized eigenvector $m_i$ of $M_n(v)$, we have constructed $k_n(v)$ distinct generalized eigenvectors $n_{i,\mu}$ of $N_n(v)$ corresponding to the $k_n(v)$ distinct roots of $\lambda$, the $n_{i,\mu}$ together in fact span $N_n(v)$.

The identity
\[\prod_{j=1}^{k_n(v)}(1-x\mu\zeta_{k_n(v)}^j) = 1-x^{k_n(v)}\mu^{k_n(v)}\]
then shows that $g_{n,v}(x) = h_{n,v}(x^{k_n(v)})$ for some polynomial $h_{n,v}(y)$ with roots $y = \lambda = \mu^{k_{n}(v)}$. We note that the above proof in fact computes the $h_{n,v}(x)$ to be exactly the characteristic polynomial of $\sigma_q^{k_n(v)}$ on $M_n(v)$. Since 
\[g_n(x) = \prod_{v \in \PP(\mathbb Z/\ell^{n}\mathbb Z)/\sim} g_{n,v}(x)\]
where the product is over a set of representatives for the $\sigma_q$ action on primitive vectors, the proof of Part (2) in Case A is completed by defining 
\[h_n(y) = \prod_{v \in \PP(\mathbb Z/\ell^{n}\mathbb Z)/\sim} h_{n,v}(y)\]
and setting $y = x^{k_n}$.

For Case B, we define (again as a product over a similar set of representatives for the $\sigma_q$ action on primitive vectors)
\[\tilde{h}_n(y) = \prod_{v \in \PP(\mathbb Z/\ell^{n}\mathbb Z)^b/\sim} h_{n,v}(y^{k_n(v)/k_n})\]
so that (with $y = x^{k_n}$)
\[g_n(x) = \prod_{v \in \PP(\mathbb Z/\ell^{n}\mathbb Z)^b/\sim} g_{n,v}(x) = \prod_{v \in \PP(\mathbb Z/\ell^{n}\mathbb Z)^b/\sim}h_{n,v}(x^{k_n(v)}) = \tilde{h}_n(x^{k_n}).\]

\textbf{Finally, we prove Part (3)}, i.e., equations \ref{eqn: main thm 3, Case A} and \ref{eqn: main thm 3, Case B}. Let us fix a generating set $m_1,\dots,m_r$ for $M_n$ over $\mathbb Z_\ell[G_n] \cong \mathbb Z_\ell[t_1,\dots,t_b]/(t_i^{\ell^n}-1 : i=1,\dots,b)$. Since $M_n$ is not a free $\mathbb Z_\ell[G_n]$ module, it might not be completely clear what a generating set should mean. For our purposes, it suffices to choose $m_1,\dots,m_r$ so that under any specialization that maps the $t_i$ to $\ell^n$ roots of unity, the $m_i$ specialize to a genuine basis over the induced specialization of $M_n$. That this is indeed possible follows from the explicit description of the $M_n$ as $G_n$ modules in Lemma \ref{lem: limiting M_n}. Such a specialization corresponds to a representation $\chi_v: G_n \to \overline{\mathbb Q}$ for $v \in \mathbb Z_\ell^b$ and we denote the induced specialization also by $\chi_v: M_n \to M_n(v)$ 

In terms of the $m_1,\dots,m_r$, $\sigma_q$ acting on $M_n$ can be represented by some invertible matrix $F(t_1,\dots,t_b)$. From this point on, \textit{we will be concerned only with this matrix} $F(t_1,\dots,t_b)$. Since $\sigma_q$ skew commutes with the $t_i$, we have
\[\sigma_q^{k_n(v)} = \prod_{i=1}^{k_n(v)}F(t^{Q^{k_n(v)-i}v}).\]
Therefore, with respect to the basis $\chi_v(m_1),\dots,\chi_v(m_r)$ of $M_n(v)$, the action of $\sigma_q^{k_n(v)}$ corresponds to evaluating the above product using the character $\chi_v$ and is represented by the matrix 
\[A_n(v) =  \prod_{i=1}^{k_n(v)}F(\zeta_{\ell^n}^{Q^{k_n(v)-i}v})\]
of Section \ref{sec: Iwasawa theory} (and we note that $A_n(v)$ is independent of our choice of $F$ or the $m_1,\dots,m_r$). As noted above, the $h_{n,v}(y)$ are the characteristic polynomials of $\sigma_q^{k_n(v)}$ on $M_n(v)$ and therefore, correspond to the $p_{n,v}(y)$ in Section \ref{sec: Iwasawa theory}. We further see that the $\tilde{h}_n(y)$ correspond to the polynomials $r_n(y)$ of Theorem \ref{thm: Main theorem, abstract, general} and by this theorem, we have the required congruence:
\[\tilde{h}_{n+1}(y) \equiv \tilde{h}_n(y) \pmod{\md}.\]

\end{proof}

\section{On the convergence of a skew-abelian Iwasawa theoretic invariant}\label{sec: Iwasawa theory}

In this section, we prove a general, abstract result about the convergence of a certain cohomological invariant defined for a skew commutative Iwasawa algebra. The set up is as follows. 

We fix an odd\footnote{As usual, the arguments of this paper go through if $\ell = 2$ with minor, standard modifications.} prime $\ell$ and positive integers $b,r$ throughout this section. All cohomology groups in this section represent group cohomology unless indicated otherwise. All congruences in this paper are in $\mathbb Z_\ell$ (and hence only concerned with the $\ell$-adic valuation) unless explicitly mentioned otherwise.

Let $\Lambda = \mathbb Z_\ell[[T_1,\dots,T_b]]$ be the $b$ dimensional Iwasawa algebra and set $t_i = 1 + T_i$. It is a local ring with maximal ideal $\mathfrak m = (\ell,T_1,\dots,T_b)$. Note that for $\lambda \in \mathbb Z_\ell$, the expression
\[t_i^{\lambda} = (1+T_i)^{\lambda} =  \sum_{k\geq 0}\binom{\lambda}{k} T_i^k\]
converges in $\Lambda$.
For $v = (v_1,\dots,v_b) \in (\mathbb Z_\ell)^b$, we define \(t^v = (t_1^{v_1},\dots,t_b^{v_b}).\)
We suppose that $\Lambda$ has an endomorphism $\sigma_q$ acting through a matrix $Q = Q_{ij} \in \GL_b(\mathbb Z_\ell)$ in the following way:
\[\sigma_q(t^v) = t^{Qv} \iff \sigma_q(T_i) = \left[\prod_j(1+T_j)^{Q_{ji}}\right] - 1 \text{ for all } i.\]

We note that the action is well defined since $\sigma_q(T_i) \in \mathfrak m$. For $v \in \mathbb Z_\ell^b$, we denote the size of the orbit of $v$ under $Q$ in $(\mathbb Z_\ell/\ell^n\mathbb Z_\ell)^b$ by $k_n(v)$.\footnote{i.e., $Q^{k_n(v)}v \equiv v \pmod{\ell^n}$ and $k_n(v)$ is the least such positive integer.} We also define
\[k_n = \min_{v \text{ primitive}} k_n(v).\]

\textit{Assumption:}\label{rmk: assumptions on Q} We suppose henceforth that $Q \equiv \Id \pmod{\ell}$ and that $Q$ fixes no vectors\footnote{If $\ell=2$, then we would need to assume that $Q \equiv 1 \pmod{4}$.}. 

\begin{lemma}\label{lem: order of Q}
Let $v$ be a primitive vector. Then there exist integers $\alpha\geq 1, \beta_v \geq 0$ so that
\[k_n(v) = \begin{cases} \ell^{n-\alpha-\beta_v} & \text{ if } n \geq \alpha+\beta_v\\ 1 &\text{ otherwise.}\end{cases}\] Moreover, there is some (minimal) $\beta_0$ such that $\beta_v \leq \beta_0$ for all primitive $v$. 

In particular, we have
\[k_n = \begin{cases} \ell^{n-\alpha-\beta_0} & \text{ if } n \geq n_0 \coloneqq \alpha+\beta_0\\ 1 &\text{ otherwise.}\end{cases}\]

\end{lemma}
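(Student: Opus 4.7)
The plan is to linearize the action of $Q$ using the $\ell$-adic exponential and logarithm on the pro-$\ell$ subgroup $\Id + \ell M_b(\mathbb{Z}_\ell) \subset \GL_b(\mathbb{Z}_\ell)$. Since $\ell > 2$ and $Q \equiv \Id \pmod{\ell}$, the series $\log Q = \sum_{j\geq 1}(-1)^{j-1}(Q-\Id)^j/j$ converges to an element of $\ell M_b(\mathbb{Z}_\ell)$, so I write $\log Q = \ell N$ with $N \in M_b(\mathbb{Z}_\ell)$. The cyclic subgroup $\langle Q\rangle$ is commutative, so $Q^m = \exp(m\ell N)$ for every $m$. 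The hypothesis that $Q$ fixes no nonzero vector means $1$ is not an eigenvalue of $Q$, and since the eigenvalues of $Q$ are the images of those of $\ell N$ under $\exp$ (a bijection on the relevant disk), this is equivalent to $N$ being invertible over $\mathbb{Q}_\ell$.

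The key computation is the factorization
\[
Q^{\ell^k}-\Id \;=\; \ell^{k+1} N U_k, \qquad U_k \;=\; \sum_{j\geq 0}\frac{(\ell^{k+1}N)^j}{(j+1)!},
\]
where a routine valuation estimate (using $\ell>2$) gives $U_k\equiv \Id\pmod{\ell^{k+1}}$, so $U_k\in\GL_b(\mathbb{Z}_\ell)$. For primitive $v$, set $\delta_v \coloneqq v_\ell(Nv)$; this is finite by injectivity of $N$. A short ultrametric bound shows $v_\ell(N(U_k-\Id)v)\geq k+1+\delta_v > \delta_v$, so $v_\ell(NU_kv)=\delta_v$ for every $k\geq 0$. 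Hence $Q^{\ell^k}v\equiv v\pmod{\ell^n}$ iff $(k+1)+\delta_v\geq n$. Because the image of $\langle Q\rangle$ in $\GL_b(\mathbb{Z}_\ell/\ell^n)$ is a pro-$\ell$ group, $k_n(v)$ is necessarily a power of $\ell$, and the inequality above forces $k_n(v) = \ell^{\max(0,\,n-\delta_v-1)}$. Taking $\alpha=1$ and $\beta_v=\delta_v$ yields the claimed formula.

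For the uniform bound, invertibility of $N$ over $\mathbb{Q}_\ell$ lets me choose the smallest integer $A\geq 0$ with $M\coloneqq \ell^A N^{-1}\in M_b(\mathbb{Z}_\ell)$. For any primitive $v$, the identity $M(Nv)=\ell^A v$ combined with the integrality of $M$ forces $\delta_v = v_\ell(Nv)\leq v_\ell(\ell^A v) = A$. Conversely, by minimality of $A$, at least one column of $M$ has a unit entry; taking $v$ to be that column produces a primitive vector with $Nv = \ell^A e_j$, hence $\delta_v = A$. Thus $\beta_0 = A$ is the desired optimal bound, attained by some primitive $v$, and $k_n=\ell^{n-1-A}$ for $n\geq n_0=1+A$.

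The main technical step is the one in paragraph two: producing the factorization with $U_k\in\GL_b(\mathbb{Z}_\ell)$ and squeezing out the exact identity $v_\ell(NU_kv)=\delta_v$ via the strong triangle inequality. Once that is in hand, the rest is linear algebra over a DVR. The assumption $\ell>2$ enters exactly at the convergence of $\log Q$ into $\ell M_b(\mathbb{Z}_\ell)$; this is the reason the footnote strengthens the hypothesis to $Q\equiv\Id\pmod{\ell^2}$ when $\ell=2$.
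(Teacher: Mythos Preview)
Your proof is correct and follows the same overall strategy as the paper: linearize via $\log Q$ and read off $k_n(v)$ from the $\ell$-adic valuation of $(Q^m-\Id)v$, which is controlled by the leading term $m(\log Q)v$. The paper writes $\log Q = \ell^\alpha X$ with $X$ not divisible by $\ell$ and sets $\beta_v = v_\ell(Xv)$, whereas you fix $\alpha=1$ and absorb the extra powers of $\ell$ into $\beta_v=\delta_v$; both parametrizations satisfy the stated lemma. Your factorization $Q^{\ell^k}-\Id=\ell^{k+1}NU_k$ with $U_k\equiv\Id\pmod{\ell^{k+1}}$ is a cleaner packaging of the same Taylor-series estimate the paper does term by term.

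The one genuine difference is the uniform bound on $\beta_v$. The paper argues topologically: the set of primitive vectors is compact, $v\mapsto Nv$ is continuous, and the image avoids $0$, hence is bounded away from $0$. You instead use that $N$ is invertible over $\mathbb Q_\ell$ and clear denominators to get $M=\ell^A N^{-1}\in M_b(\mathbb Z_\ell)$; the identity $M(Nv)=\ell^A v$ then forces $\delta_v\le A$, and minimality of $A$ furnishes a primitive column of $M$ realizing equality. Your argument is more constructive and identifies $\beta_0$ explicitly as the exponent needed to integrate $N^{-1}$; the paper's compactness argument is shorter but gives no formula for $\beta_0$.
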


\begin{proof}

Since $Q \equiv \Id \pmod{\ell}$, we have $\log Q = \ell^\alpha X$ for $\alpha \geq 1$ with $X\in M_b(\mathbb Z_\ell)$ not divisible by $\ell$. Since $\ell\geq 3$,
\[(Q^m-\Id)v = \exp(m\log Q)v - v = m\ell^\alpha Xv + \frac{(m\ell^\alpha)^2}{2}X^2v + \dots .\]
Since $Q$ does not fix any vectors, $Xv \neq 0$ so let $\beta_v$ be the largest value such that $Xv \equiv 0 \pmod{\ell^{\beta_v}}$. We see that $k_n(v)$ is the smallest $m$ so that $(Q^m-\Id)v$ is divisible by $\ell^n$. Since $X^kv \equiv 0 \pmod{\ell^{\beta_v}}$ for any $k\geq 1$ too, the $\ell$-adic valuation of $(Q^m-\Id)v$ is determined by the leading term $m\ell^\alpha Xv$ so that
\[k_n(v) = \begin{cases} \ell^{n-\alpha-\beta_v} & \text{ if } n \geq \alpha+\beta_v\\ 1 &\text{ otherwise.}\end{cases}\]It remains to show that there is a uniform upper bound on $\beta_v$.

Let $\pi: \mathbb Z_\ell^b \to \mathbb F_\ell^b$ be the reduction map. The primitive vectors correspond to the subspace $\PP = \pi^{-1}(\mathbb F_\ell^{b}-\{0\})$ which is a closed (and open) subset of $\mathbb Z_\ell^b$. Therefore $\PP$ is compact and by continuity of multiplication by $X$,
\[X\PP = \{Xv : v \in \PP\} \subset \mathbb Z_\ell^b\]
is compact and closed too. By assumption on $Q$, $X\PP$ does not contain $0$ (since this would correspond to a fixed point of $Q$). This implies that $X\PP$ is in fact bounded away from $0$, i.e, there is some minimal $\beta_0$ so that the image of $X\PP$ in $(\mathbb Z/\ell^{\beta_0+1}\mathbb Z_\ell)^b$ does not contain $0$ so that $\beta_v \leq \beta_0$ for every primitive $v$ (and $\beta_0 = \beta_v$ for some primitive $v$).

\end{proof}

\begin{remark}
It is easy to see why we need to restrict to $v$ primitive and to $Q$ not having any fixed vectors. If $Qv=v$, then $k_n(v) = 1$ and if $v = \ell^sv_0$, then $k_n(v) = 1$ for $n \leq s$ which is an obstruction to a uniform bound on $n$.

\end{remark}

\subsection{A cohomological interpretation}

Let $M$ be a free $\Lambda$ module of rank $r$ with a $\Lambda$-linear endomorphism $\Phi: M \to M$. Upon picking a basis $m_1,\dots,m_r$ for $M$, we express $\Phi$ as a matrix $F(T_1,\dots,T_b)$ with entries in $\Lambda$. We suppose that $\Phi$ skew commutes with $\sigma_q$ in the following sense:
\[\sigma_q\circ F = F(\sigma_q(T_1),\dots,\sigma_q(T_b))\circ\sigma_q.\]
Note that $\sigma_q$ acts on $\GL_r(\Lambda)$ through its action on $\Lambda$. This data of $M$ and the endomorphism $\Phi$ as above gives rise to an element $\eta$ in the non abelian cohomology group $H^1 (\mathbb Z\sigma_q, \GL_r(\Lambda))$ in the following way: 

Given a $F$ as above, we can define a cocycle representative by \(\eta(\sigma_q) = F \in \GL_r(\Lambda).\) A change of basis by a matrix $P \in \GL_r(\Lambda)$ corresponds to $F \to P(\sigma_q(T_1),\dots,\sigma_q(T_b))FP^{-1}$ which is exactly the boundary action. Therefore, the cohomology class $\eta \in H^1(\mathbb Z\sigma_q, \GL_r(\Lambda))$ depends only on $(M,\Phi)$. 

For a positive integer $n$ and $v = (v_1,\dots,v_b) \in \mathbb Z_\ell^b$, note that since $T_i = \zeta_{\ell^n}^{v_i} - 1$ is in the maximal ideal of $\mathbb Z_\ell[\zeta_{\ell^n}]$, we can define the quotient
\[\Lambda_n(v) = \frac{\mathbb Z_\ell[[T_1,\dots,T_b]]}{(t_1 = \zeta_{\ell^n}^{v_1},\dots,t_b = \zeta_{\ell^n}^{v_b})}.\]
We note that $\sigma_q^{k_n(v)}$ fixes the ideal $(t_1 - \zeta_{\ell^n}^{v_1},\dots,t_b - \zeta_{\ell^n}^{v_b}) \subset \mathbb Z_\ell[[T_1,\dots,T_b]]$ and thus descends to an endomorphism of $\Lambda_n(v)$.

Henceforth, we fix $\eta \in H^1(\mathbb Z\sigma_q,\GL_r(\Lambda)), v \in \mathbb Z_\ell^b$ and define the following sequence of invariants (implicitly depending on $\eta$) taking values in polynomials in one variable:
\[p_{n,-}(y): v\in H^1(\mathbb Z\sigma_q,\GL_r(\Lambda)) \xrightarrow{\mathrm{restriction}} H^1(\mathbb Z\sigma_q^{k_n(v)},\GL_r(\Lambda_n(v)))  \xrightarrow{\text{char poly}} \Lambda_n(v)[y] \ni p_{n,v}(y)\]
where for the first map, we restrict along $\mathbb Z\sigma_q^{k_n(v)}\subset \mathbb Z\sigma_q$ and push forward along the quotient $\GL_r(\Lambda) \to \GL_r(\Lambda_n(v))$ and for the second map, since $\sigma_q^{k_n(v)}$ acts trivially on $\GL_r(\Lambda_n(v))$, we have
\[H^1(\mathbb Z\sigma_q^{k_n(v)},\GL_r(\Lambda_n(v))) = \mathrm{Hom}(\mathbb Z\sigma_q^{k_n(v)}, \GL_r(\Lambda_n(v)))/\text{conjugacy} = \GL_r(\Lambda_n(v))/\text{conjugacy}\]
which shows that the characteristic polynomial is a well defined invariant. Tracing through the definition in terms of the value of $F = \eta(\sigma_q)$ for $\eta \in H^1(\mathbb Z\sigma_q,\GL_r(\Lambda))$, $p_{n,v}(y)$ has the following explicit formula. For $v \in \mathbb Z_\ell^b$, we denote $F(t_1 = \zeta_{\ell^n}^{v_1},\dots,t_b = \zeta_{\ell^n}^{v_b})$ by $F(\zeta_{\ell^n}^v)$ and define
\begin{equation}\label{eqn: A_n(v)}
    A_n(v) \coloneqq F(\zeta_{\ell^n}^{Q^{k_n(v)-1}v})\dots F(\zeta_{\ell^n}^v) = \prod_{i=1}^{k_n}F(\zeta_{\ell^n}^{Q^{-i}v})
\end{equation}
where we implicitly use that $Q^{k_n(v)}v \equiv v \pmod{\ell^n}$ for the second equality. The characteristic polynomial of $A_n(v)$ is exactly \[p_{n,v}(y) = \det(\Id - yA_n(v)).\] Equivalently, it is the characteristic polynomial of $\sigma_q^{k_n(v)}$ acting on $\Lambda_n(v)$.

As the main results of this section, we will prove two $\ell$-adic convergence results regarding the sequence of polynomials $p_{n,v}(y)$ as $n \to \infty$.

\begin{theorem}\label{thm: Main theorem, abstract, scalar}
Suppose that $Q = q\Id$ is a scalar matrix. For $n$ sufficiently large so that $k_{n+1} = \ell k_n$, the characteristic polynomials satisfy the congruence
\[p_{n+1,v}(y) \equiv p_{n,v}(y) \pmod{k_{n+1}}.\]
\end{theorem}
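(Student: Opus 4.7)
The plan is to exploit the product structure in the definition of $A_{n+1}(v)$ that arises from the hypothesis $k_{n+1}=\ell k_n$, compare it with a universal $\ell$-th power, and use the Arnold-Zarelua theorem (Theorem \ref{thm: Arnold-Zarelua}) applied to an intermediate matrix to gain the necessary $\ell$-adic precision.

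Concretely, I would begin by setting up the universal matrix
\[\Psi_n \coloneqq \prod_{j=1}^{k_n}F(\sigma_q^{-j}(T_1,\dots,T_b)) \in M_r(\Lambda),\]
so that $A_n(v)$ is the specialization of $\Psi_n$ at $t_i=\zeta_{\ell^n}^{v_i}$. Partitioning the product for $A_{n+1}(v)$ into $\ell$ blocks of size $k_n$ translates to the identity $\Psi_{n+1} = \prod_{a=0}^{\ell-1}\sigma_q^{-ak_n}(\Psi_n)$ in $M_r(\Lambda)$. The hypothesis $k_{n+1}=\ell k_n$ is equivalent to $q^{-k_n}\equiv 1+\ell^n u\pmod{\ell^{n+1}}$ for a unit $u\in\mathbb Z_\ell^\times$; in particular, $\sigma_q^{-k_n}$ acts trivially on the quotient $\Lambda/J$ where $J=(t_1^{\ell^n}-1,\dots,t_b^{\ell^n}-1)$, since $(t_i^{\ell^n})^m=1$ in $\Lambda/J$ for any $m\in\mathbb Z_\ell$. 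Thus $\Psi_{n+1}\equiv \Psi_n^\ell\pmod{J\cdot M_r(\Lambda)}$, and specializing at primitive $v$ yields a baseline congruence $A_{n+1}(v)\equiv B_0^\ell\pmod{(\zeta_\ell-1)}$ in $M_r(\mathbb Z_\ell[\zeta_{\ell^{n+1}}])$, where $B_0$ is the specialization of $\Psi_n$ at the $\ell^{n+1}$-th roots of unity.

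The baseline congruence modulo $(\zeta_\ell-1)$, which has $\ell$-adic valuation $1/(\ell-1)$, is too crude to reach the required $\pmod{k_{n+1}}$. To sharpen it, I would expand $\sigma_q^{-k_n}=\Id+uD+O(J^2)$ for the natural derivation $D$ on $\Lambda$ with $D(t_i)=t_i\cdot(t_i^{\ell^n}-1)$ and iterate, writing $\Psi_{n+1} = \Psi_n^\ell + \sum_{s\geq 1}(\text{cross terms involving $D^s$})$. Taking traces of powers and using cyclic invariance, the cross terms organize by $s$ with combinatorial coefficient equal to a sum over ordered tuples in $\{0,\dots,\ell-1\}^s$; these sums acquire automatic factors of $\ell$ (e.g.\ $\sum_{a=0}^{\ell-1}a=\ell(\ell-1)/2$ for odd $\ell$). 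Combined with the $(\zeta_\ell-1)^s$ gained from $D^s$ after specialization, the cross-terms accumulate $\ell$-adic valuation growing in $s$. Arnold-Zarelua applied to the single matrix $B_0$ absorbs $\tr(B_0^{\ell k})$ into $\tr(B_0^k)$ modulo $\ell^{n+1}$, and a final matching between $B_0$ and $A_n(v)$ — both specializations of the same $\Psi_n$ at primitive $\ell$-power roots of unity with Galois-invariant characteristic polynomial (both $p_{n,v}$ and $p_{n+1,v}$ are $\sigma_q$-invariant, since $\sigma_q(A_n(v))$ is a cyclic rotation of the defining product, so they lie in $\mathbb Z_\ell[\zeta_\ell][y]$) — closes the loop. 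Newton's identities then lift the trace congruences $\tr(A_{n+1}(v)^k)\equiv\tr(A_n(v)^k)\pmod{k_{n+1}}$ to the claimed congruence of characteristic polynomials.

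The main obstacle is the precise bookkeeping in the sharpening step, in particular ensuring the Galois-averaged cross terms accumulate enough factors of $\ell$ to reach valuation $n+1-n_0$, and matching $B_0$ with $A_n(v)$ cleanly at the level of traces rather than as individual matrices (the two live in different cyclotomic subrings, so the comparison must happen after taking traces). A potentially cleaner strategy, in the spirit of the original Arnold-Zarelua proof, would reinterpret $\tr(A_{n+1}(v)^k)-\tr(A_n(v)^k)$ as a sum over cyclic words of length $k\cdot k_{n+1}$ in the factors $F(\zeta_{\ell^{n+1}}^{q^{-j}v})$, on which the cyclic group $\mathbb Z/k_{n+1}\mathbb Z$ acts freely outside a controllable locus, forcing the bulk of the sum to be divisible by $k_{n+1}$ automatically.
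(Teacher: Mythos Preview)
Both your primary approach and your alternative miss the mechanism that actually produces the $\ell$-adic divisibility. In the primary approach, the claim that Arnold--Zarelua ``absorbs $\tr(B_0^{\ell k})$ into $\tr(B_0^k)$ modulo $\ell^{n+1}$'' is wrong: Theorem~\ref{thm: Arnold-Zarelua} only yields $\tr(B_0^{\ell k})\equiv\tr(B_0^k)\pmod{\ell^{v_\ell(k)+1}}$, which for generic $k$ is merely $\pmod\ell$, far short of $k_{n+1}$. The matching of $B_0$ with $A_n(v)$ is also broken: $B_0$ is the specialization of $\Psi_n$ at $\zeta_{\ell^{n+1}}^v$ while $A_n(v)$ is the specialization at $\zeta_{\ell^n}^v$, and these are not Galois conjugates (the map $\zeta_{\ell^{n+1}}\mapsto\zeta_{\ell^n}$ is not an automorphism). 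Indeed $\sigma_q(B_0)$ is not even conjugate to $B_0$, since $q^{k_n}v\not\equiv v\pmod{\ell^{n+1}}$, so the characteristic polynomial of $B_0$ fails to be $\sigma_q$-invariant and there is no Galois descent to compare with $p_{n,v}$.

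Your alternative --- expand $\tr(A_{n+1}^d(v))$ over tuples $J$ and use the cyclic $\mathbb Z/(dk_{n+1})$-action --- is exactly the paper's route, but the decisive point is that free orbits do \emph{not} consist of equal terms: the cyclic shift $\tau$ fixes $\tr(F_J)$ but moves the root-of-unity exponent via $\lambda_{\tau J}(v)=\lambda_J(qv)$, so orbit cardinality alone yields no divisibility. What makes the argument work is Lemma~\ref{lem: sum of q-conj is zero}: for any $w\in\mathbb Z_\ell$ and any multiple $\rho$ of $k_n$, one has $\sum_{i=1}^\rho\zeta_{\ell^n}^{q^iw}\equiv 0\pmod\rho$. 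This lemma handles the orbits with $\ell\nmid r$ directly; for orbits with $\ell\mid r$ one pairs the length-$dk_{n+1}$ tuple with its length-$dk_n$ counterpart and applies Arnold--Zarelua not to a single $B_0$ but to each coefficient matrix $F_K$ with the tuple-dependent exponent $r$, obtaining $\tr(F_K^r)\equiv\tr(F_K^{r/\ell})\pmod r$, which combines with Lemma~\ref{lem: sum of q-conj is zero} on the residual sum to reach $\pmod{r\rho=dk_{n+1}}$.
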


\begin{remark}\label{rmk: failure of convergence, example}Unfortunately, this strong congruence is not true in general if $Q$ is not scalar (even if $r=1$) as the following example shows. Take $\ell = 5,q_1 = 6, q_2 = 11$ and let $Q$ be the diagonal matrix with entries $q_1,q_2$. Take $F(t_1,t_2) = 1 + t_1^3t_2$ and $v = (1,1) \in \mathbb Z_\ell^2$. Computation shows that $A_3(v) = 49, A_2(v) = 7$ so that the difference is only divisible by $7$ and not $k_3 = 49$ as the above theorem would suggest. Nevertheless, the computational evidence also suggests that the $A_n(v)$ still converge, just with a slower rate of convergence. As we will see in Remark \ref{rmk: failure in general case, sum of rou}, this will be related to the vanishing of certain sums of roots of unity.

For our geometric applications, the following statement is sufficient. Recall that $\PP((\mathbb Z/\ell^n\mathbb Z)^b)$ denotes the space of primitive vectors. It is acted upon by $Q$ and we denote a set of representatives for the orbits of $Q^{\mathbb Z}$ acting on $\PP((\mathbb Z/\ell^n\mathbb Z)^b)$ by $\PP((\mathbb Z/\ell^n\mathbb Z)^b)/\sim$. For $v' = Qv$, we note that $p_{n,v} \equiv p_{n,v'}$ so that $p_{n,v}$ is independent of the choice of representative. The following polynomial depends only on the class $\eta$.

\end{remark}

\begin{definition}
With $A_n(v)$ and $p_{n,v}$ as before, define
\[r_n(y) \coloneqq \prod_{v \in \PP((\mathbb Z/\ell^n\mathbb Z)^b)/\sim}p_{n,v}(y^{k_n(v)/k_{n}}).\]
\end{definition}

\begin{theorem}\label{thm: Main theorem, abstract, general}
Let $Q$ be any matrix in the kernel of $\GL_b(\mathbb Z_\ell) \to \GL_b(\mathbb F_\ell)$. For $n \geq n_0$ so that $k_{n+1} = \ell k_n$, We have
\[r_{n+1}(y) \equiv r_n^{\ell^{b-1}}(y) \pmod{\md}.\]
If $Q = q\Id$, we have the stronger congruence
\[r_{n+1}(y) \equiv r_n^{\ell^{b-1}}(y) \pmod{\ell^{nb}}.\]
\end{theorem}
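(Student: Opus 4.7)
The plan is to first express $r_n(y)$ as a single characteristic polynomial, reducing the problem to a matrix-level comparison, and then tackle the general case via an Arnold--Zarelua-style trace calculation and the scalar case via the pointwise congruence of Theorem \ref{thm: Main theorem, abstract, scalar}.

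My first step would be to realize $r_n(y)$ as the characteristic polynomial of a global matrix. Let $B_n$ denote the action of $\sigma_q$ on $M_n^{\mathrm{prim}} := \bigoplus_{v \in \PP((\mathbb Z/\ell^n\mathbb Z)^b)} M_n(v)$, which cyclically permutes the $M_n(v)$ within each $Q$-orbit with return map $A_n(v)$. A direct calculation on each orbit (if $B$ cyclically permutes $V_1, \ldots, V_k$ with $B^k|_{V_1} = A$, then $\det(I - zB) = \det(I - z^k A)$) yields the identity $r_n(y^{k_n}) = \det(I - y B_n)$. Substituting $y \mapsto z^{k_{n+1}}$ and using $k_{n+1} = \ell k_n$, both target congruences become statements about characteristic polynomials of matrices of equal dimension: $\det(I - z B_{n+1}) \equiv \det(I - z^{\ell} B_n)^{\ell^{b-1}}$ modulo $\ell^n$ in general, and modulo $\ell^{nb}$ when $Q = qI$.

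To prove the general congruence modulo $\ell^n$, I would take $\log$ of the quotient and compare coefficient-by-coefficient: the coefficient of $z^k$ in $\log\bigl[\det(I - z B_{n+1}) / \det(I - z^\ell B_n)^{\ell^{b-1}}\bigr]$ equals $-\bigl[\tr(B_{n+1}^k) - \ell^{b}\tr(B_n^{k/\ell})\cdot \mathbf{1}_{\ell\mid k}\bigr]/k$. The cyclic-permutation structure of $B_n$ forces these traces to vanish unless the appropriate $k_n$ (resp.~$k_{n+1}$) divides the exponent, reducing the comparison to per-orbit identities of the form $\sum_{\tilde{v}}\tr(A_{n+1}(\tilde{v})^j) \equiv \ell^{b-1}\tr(A_n(v)^j) \pmod{\ell^n}$, where the sum runs over the $\ell^{b-1}$ lift-orbits (mod $\ell^{n+1}$) of a given primitive $v$-orbit (mod $\ell^n$). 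This I would establish by applying Theorem \ref{thm: Arnold-Zarelua} to the local return matrices $A_{n+1}(\tilde{v})$, combined with the observation that all $\ell^{b-1}$ lifts $\tilde{v}$ reduce to $v$ modulo $\ell^n$ so that $A_{n+1}(\tilde{v})$ reduces to $A_n(v)^\ell$ modulo $\ell^n$ up to conjugation (since the $\ell k_n$ factors group into $\ell$ copies of the $k_n$ factors of $A_n(v)$ after taking reductions of the roots of unity).

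For the stronger scalar congruence modulo $\ell^{nb}$, I would invoke Theorem \ref{thm: Main theorem, abstract, scalar} to get the pointwise congruence $p_{n+1, \tilde{v}}(y) \equiv p_{n, v}(y) \pmod{\ell^n}$ for each lift $\tilde{v}$. Writing $p_{n+1, \tilde{v}}(y) = p_{n,v}(y) + \ell^n E_{\tilde{v}}(y) + O(\ell^{2n})$ and expanding the product over the $\ell^{b-1}$ lift-orbits of $v$, the leading correction is $\ell^n p_{n,v}(y)^{\ell^{b-1}-1}\sum_{\tilde{v}} E_{\tilde{v}}(y)$. The required improvement follows from the vanishing $\sum_{\tilde{v}} E_{\tilde{v}}(y) \equiv 0 \pmod{\ell^{n(b-1)}}$, which I would reduce, via direct Taylor expansion in the $b$ coordinates of the lift parameter $w \in \mathbb F_\ell^b$ (with $\tilde{v} = v + \ell^n w$), to iterated vanishing of character sums $\sum_{w}\zeta_\ell^{L(w)} = 0$ for nontrivial linear forms $L$, echoing the sums of roots of unity highlighted in Remark \ref{rmk: failure of convergence, example}.

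The main technical hurdle is the sharpening in the scalar case: proving the refined cancellation $\sum_{\tilde{v}} E_{\tilde{v}}(y) \equiv 0 \pmod{\ell^{n(b-1)}}$. While the first-order vanishing $\sum_{w} \zeta_\ell^{aw} = 0$ for $a \not\equiv 0 \pmod \ell$ is elementary, iterating it to recover the full $b-1$ factors of $\ell^n$ requires carefully tracking how the higher-order Taylor coefficients of $F(\zeta_{\ell^{n+1}}^{q^{-i}\tilde{v}})$ in the lift-direction contribute to $E_{\tilde{v}}(y)$, and verifying that the geometry of the $\ell^{b-1}$ lift-orbits (parametrized by cosets of $\langle v\rangle$ in $\mathbb F_\ell^b$ under the $Q^{k_n}$-translation) delivers the expected rate of cancellation order-by-order.
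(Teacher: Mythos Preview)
Your argument for the general case has a genuine gap. The claimed observation that ``$A_{n+1}(\tilde v)$ reduces to $A_n(v)^\ell$ modulo $\ell^n$ up to conjugation'' is false: the individual factors $F(\zeta_{\ell^{n+1}}^{Q^{-i}\tilde v})$ and $F(\zeta_{\ell^n}^{Q^{-i}v})$ are not close modulo any useful power of $\ell$, since $\zeta_{\ell^{n+1}}^{a}-\zeta_{\ell^n}^{a}$ has $\ell$-adic valuation only $\tfrac{1}{\ell^n(\ell-1)}$. The example from Remark~\ref{rmk: failure of convergence, example} already witnesses this at the level of traces: there $\ell=5$, $n=2$, and $A_3(v)=49\equiv 24\pmod{25}$ while $A_2(v)^5=7^5\equiv 7\pmod{25}$. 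So Arnold--Zarelua together with a naive reduction of roots of unity cannot yield the per-lift-orbit congruence you state; whatever cancellation occurs must come from summing over many vectors at once, not from any single lift. Indeed, for $b=1$ there is only one lift-orbit and your per-orbit claim would then be strictly stronger than Theorem~\ref{thm: Main theorem, abstract, scalar}, which the remark following Theorem~\ref{thm: Main theorem, abstract, general} explicitly notes gives only the weaker modulus $k_{n+1}$.

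The paper's proof abandons per-orbit reasoning entirely. After the same exponential-to-trace reduction you perform, it rewrites $C_{d,n}$ as a sum over \emph{all} primitive vectors in $V_{e,n}$ (not over orbit representatives), expands each trace into tuples via $\tr A_n^{\cdots}(v)=\sum_J\tr(F_J)\,\zeta_{\ell^n}^{\lambda_J(v)}$, and then for each \emph{fixed} tuple $J$ applies the elementary character-sum bound of Lemma~\ref{lem: sum over all vectors is divisible}: $\sum_{v\in\PP(M)}\chi(v)\equiv 0\pmod{\ell^{n-1}}$ for any character $\chi$ on a finite $\ell$-module $M$ with $\ell^nM=0$. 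This global sum over $v$ is what supplies the factor of $\ell^n$; the cyclic-permutation grouping and Arnold--Zarelua then handle periodic tuples exactly as in the proof of Theorem~\ref{thm: Main theorem, abstract, scalar}. The scalar strengthening to modulus $\ell^{nb}$ falls out of the \emph{same} argument, simply using the sharper bound $\ell^{(n-1)b}$ available in Lemma~\ref{lem: sum over all vectors is divisible} when $M=(\mathbb Z/\ell^{n+1}\mathbb Z)^b$. There is no need for your pointwise-plus-Taylor-cancellation route, and the ``main technical hurdle'' you identify never arises.
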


\begin{remark}
When $b=1$, the two bounds agree since all matrices are scalar! Note that Theorem \ref{thm: Main theorem, abstract, scalar} only implies the following weaker congruence for $b=1$:
\[r_{n+1}(y) \equiv r_n(y) \pmod{k_{n+1}}.\]

\end{remark}

\begin{remark}\label{rmk: sharp congruence}
Numerical evidence shows that these congruences are in fact sharp and the bounds in Theorems \ref{thm: Main theorem, abstract, scalar} and \ref{thm: Main theorem, abstract, general} are realized in most cases (but not always!). For instance, with $r=1,b=2,\ell=3$ and $Q = (1+\ell^2)\Id$ a scalar matrix, the computation
\[A_3\left(\frac{1}{1-\ell},\frac{1}{1-\ell}\right)-A_2\left(\frac{1}{1-\ell},\frac{1}{1-\ell}\right) = 70\ell\]
shows the sharpness of Theorem \ref{thm: Main theorem, abstract, scalar}. The same example also shows the sharpness of part $2$ of Theorem \ref{thm: Main theorem, abstract, general}. Let $d\geq 1$ and $\tau_3,\tau_2 \in \mathbb Z_\ell$ so that $r_{3}(y) = 1 - \tau_3y + \dots$ and $r_2^{\ell}(y) = 1 - \tau_2y + \dots$. Then
\[\tau_3 - \ell\tau_2 = 560\ell^4.\]
\end{remark}

Both the theorems above will depend on the following generalization of Fermat's little theorem to matrices to deal with the case when $r \geq 1$. This generalization of Fermat's little theorem can be seen as the degenerate case of Theorem \ref{thm: Main theorem, abstract, scalar} when $F(T_1,\dots,T_b) = F_0$ is constant in the $T_i$.

\subsection{A generalization of Fermat's little theorem to matrices}

In this subsection we state and prove a generalization of Fermat's little theorem to the case of matrices. As noted in the introduction, this generalization was conjectured by Arnold \cite{Arnold} and proved by Zarelua \cite{Zarelua} (and many other following works). Our proof is short and apparently new\footnote{In the course of writing this paper, we found essentially the same proof by \href{https://rjlipton.wpcomstaging.com/2009/08/07/fermats-little-theorem-for-matrices/}{Qiaochu Yuan} in a blog post from 2009.} and therefore we present it here.

\begin{theorem}[Arnold-Zarelua]\label{thm: Arnold-Zarelua}
Let $A \in M_r(\mathbb Z_\ell)$. Then:
\[\tr A^{\ell^{n+1}} \equiv \tr A^{\ell^n} \pmod{\ell^{n+1}}.\]
In fact, we also have
\[\det(1 - xA^{\ell^{n+1}}) \equiv \det(1 - xA^{\ell^{n}}) \pmod{\ell^{n+1}}.\]
\end{theorem}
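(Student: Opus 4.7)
The plan is to first establish the trace congruence $\tr(A^{\ell^{n+1}}) \equiv \tr(A^{\ell^n}) \pmod{\ell^{n+1}}$ by a direct combinatorial argument, and then deduce the characteristic polynomial congruence via exterior powers.

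For the trace, I would expand
\[
\tr(A^n) = \sum_{(i_1,\ldots,i_n) \in \{1,\ldots,r\}^n} A_{i_1 i_2} A_{i_2 i_3} \cdots A_{i_n i_1}
\]
and partition the summands by the orbits of the cyclic rotation action of $C_n$. The scalars are rotation-invariant, and each orbit has size equal to the minimal period $d \mid n$ of its elements, so grouping by the bijection between $C_n$-orbits of period-$d$ words and $C_d$-orbits of aperiodic length-$d$ words yields
\[
\tr(A^n) = \sum_{d \mid n} d \cdot \sum_{[j] \in \mathcal{A}_d/C_d} w_d([j])^{n/d},
\]
where $\mathcal{A}_d$ denotes the set of aperiodic length-$d$ sequences and $w_d([j]) := A_{j_1 j_2}\cdots A_{j_d j_1} \in \mathbb Z_\ell$ is well defined on $C_d$-orbits. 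Substituting $n = \ell^{k+1}$ and $n = \ell^k$ and subtracting, the new term at $d = \ell^{k+1}$ is automatically divisible by $\ell^{k+1}$, and each remaining term (indexed by $0 \leq j \leq k$ and by an aperiodic orbit) reduces to
\[
\ell^j \, w^{\ell^{k-j}}\bigl(w^{\ell^{k-j}(\ell-1)}-1\bigr), \qquad w \in \mathbb Z_\ell.
\]
If $w$ is a unit, the classical congruence $w^{\ell^{k-j}(\ell-1)} \equiv 1 \pmod{\ell^{k-j+1}}$ (valid since $\ell$ is odd) makes the whole term divisible by $\ell^{k+1}$; if $w \in \ell\mathbb Z_\ell$, the factor $w^{\ell^{k-j}}$ already has $\ell$-valuation at least $\ell^{k-j} \geq k-j+1$, and the same conclusion follows.

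For the characteristic polynomial, I would use the identity that the $k$-th coefficient of $\det(1 - xA^m)$ is, up to sign, the elementary symmetric function $e_k(\alpha_1^m,\ldots,\alpha_r^m) = \tr\bigl((\wedge^k A)^m\bigr)$ in the eigenvalues of $A$; applying the trace congruence just proven to each of the $\mathbb Z_\ell$-integral matrices $\wedge^k A$ yields the desired polynomial congruence coefficient by coefficient. The main obstacle I anticipate is keeping the case analysis in the trace step uniform across orbits; the whole argument ultimately collapses to a single application of Euler's theorem for $(\mathbb Z/\ell^{m+1})^\times$ together with the elementary bound $\ell^m \geq m+1$ for $m \geq 0$.
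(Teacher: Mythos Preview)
Your proof is correct. The trace argument is essentially the paper's: both decompose the sum
\(\tr(A^m)=\sum_{(i_1,\dots,i_m)} A_{i_1i_2}\cdots A_{i_mi_1}\)
by the cyclic $C_m$-action on tuples. The paper phrases this graph-theoretically (lift $A$ to a matrix with nonnegative integer entries and count closed walks on the associated multigraph), which lets it avoid your unit/non-unit case split: walks of period dividing $\ell^n$ are in bijection with walks of length $\ell^n$, and the rest come in orbits of size $\ell^{n+1}$. Your algebraic version over $\mathbb Z_\ell$ is a bit longer but has the virtue of working uniformly without the lifting step.

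For the determinant congruence your route genuinely differs. You pass to exterior powers and read off the $k$-th coefficient of $\det(1-xA^m)$ as $(-1)^k\tr\bigl((\wedge^k A)^m\bigr)$, then apply the trace congruence to each integral matrix $\wedge^k A$. The paper instead uses the identity
\(\det(1-xB)=\exp\bigl(-\sum_{d\ge 1}\tr(B^d)x^d/d\bigr)\)
and checks that $\exp$ respects congruences modulo $\ell^{n+1}$ (via Legendre's formula for $v_\ell(r!)$). Your argument is more elementary and self-contained; the paper's has the advantage that the same $\det=\exp(\text{trace sum})$ reduction is the engine of its later main theorems, so proving it here is a rehearsal rather than a detour.
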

\begin{proof}
We fix a $n$. Since we are proving a congruence modulo $\ell^{n+1}$, we can replace $A$ by a $r\times r$ matrix with non negative integer entries. Let $G$ be the directed multigraph with adjacency matrix $A$, i.e it has $r$ vertices labelled from $1$ to $r$ and there are $a_{ij}$ many edges from $i$ to $j$.

A closed path of length $n$ on the graph corresponds to a sequence of edges $e_1,\dots,e_{n-1}$ such that the in-vertex of $e_{i+1}$ is the out-vertex of $e_{i}$ and the path starts and ends at the same vertex. The quantity $\tr A^n$ has the graph theoretic interpretation of being the number of closed paths of length $n$ on $G$. 

Now, consider a closed path $P$ of length $\ell^{n+1}$. The cyclic group of order $\ell^{n+1}$ acts on the path by permuting
\[(e_1,\dots,e_{n-1}) \to (e_2,\dots,e_{n-1},e_1).\] 
Since we are working modulo $\ell^{n+1}$, we can ignore those paths $P$ where the orbit by this action has size $\ell^{n+1}$. The remaining paths $P$ are exactly those which are concatenations of $\ell$ copies of a path of length $\ell^n$. These are exactly counted by $\tr(A^{\ell^n})$ and therefore we have shown the required congruence
\[\tr(A^{\ell^{n+1}}) \equiv \tr(A^{\ell^n}) \pmod{\ell^{n+1}}.\]
To prove the corresponding congruence for characteristic polynomials, we use the well known determinant to trace exponential identity (as formal power series in $x$)
\begin{equation}\label{eqn: det to trace}
    \det(1-xB) = \exp\left(-\sum_{d\geq 1}\frac{\tr(B^d)x^d}{d}\right).
\end{equation} 
Let $d = d_0\ell^e$ for $d_0$ co-prime to $\ell$. The congruence above on powers of $A^{d_0}$ then implies that
\[\tr(A^{d\ell^{n+1}}) \equiv \tr(A^{d\ell^{n}}) \pmod{d\ell^{n+1}}.\]
Since $\ell > 2$, $\alpha\equiv \beta \pmod{\ell^n}$ for $n\geq 1$ implies that $\exp(\alpha) \equiv \exp(\beta) \pmod{\ell^n}$:

To see this, let $t \in \mathbb Z_\ell$ such that $\ell^n|t$. We will show that $e^t \equiv 1 \pmod{\ell^n}$. Supposing this, we see that \[\alpha \equiv \beta \pmod{\ell^n} \implies e^{\alpha-\beta} \equiv 1 \pmod{\ell^n} \implies e^\alpha \equiv e^\beta \pmod{\ell^n}\] since $e^{\beta} \in \mathbb Z_\ell[\![x]\!]$ in our case.

To show that $e^t \equiv 1\pmod{\ell^n}$, we argue by cases. The terms appearing in the Taylor expansion of $\exp(t)$ are of the form $t^r/r!$. If $r=1$, then $\ell^n|t$. In general, Legendre's formula shows that $t^r/r$ is divisible by $\ell^{\delta_{n,r}}$ for  $\delta_{n,r} \coloneqq nr - r/(\ell-1)$. For $r\geq 2$, note that \[\delta_{n,r} \geq n \impliedby nr - r/2 - n \geq 0 \iff 2n \geq \frac{r}{r-1} \text{ which is always true for } n \geq 1.\]

We finish our proof now by noting that the congruences on the traces implies (by the exponential identity)
\[\det(1 - xA^{\ell^{n+1}}) \equiv \det(1 - xA^{\ell^{n}}) \pmod{\ell^{n+1}}.\]

\end{proof}

\subsection{A proof of the main congruences}

In this subsection, we prove Theorems \ref{thm: Main theorem, abstract, scalar} and \ref{thm: Main theorem, abstract, general}. It will help to set up some notation and make some easy reductions first.

Recall that $F(T_1,\dots,T_b)$ is a power series in the $T_i$ and to define $A_{n+1}(v)$, we are required to evaluate $F$ at $T_i = \zeta_{\ell^{n+1}}^{v_i} - 1$ (for $i=1,\dots,b$) which is in the maximal ideal
for the local ring $\mathbb Z_\ell[\zeta_{\ell^{n+1}}]$. Since we are interested in a congruence modulo $ k_{n+1}(v)$ (or $k_{n+1}$), we can truncate the $F$ at some finite degree $d$ so that $(\zeta_{\ell^{n+1}}-1)^d \equiv 0 \pmod{k_{n+1}(v)}$ and suppose that it is a polynomial in the $t_i = T_i + 1$ of the form
\[F = \sum_{I \in \mathbb N^b}F_It_1^{i_1}\dots t_b^{t_b}\]
where the $F_I$ are $r\times r$ matrices over $\mathbb Z_\ell$.

Let $\rho\geq 1$ and for a tuple $J = (I_1,\dots,I_\rho) \in \mathbb (N^b)^\rho$, we define \(F_J = \prod_{j=1}^\rho F_{I_j}.\) Using the standard notation $\langle-,-\rangle$ for inner products (and considering $\mathbb N^b \subset \mathbb Z_\ell^b$), we also define the linear form
\[\lambda_J(v) = \sum_{j=1}^\rho\langle I_j, Q^{-j}v\rangle.\]
In terms of this notation, we see that
\[A_{n+1}^d(v) = \sum_{J \in \mathbb (N^b)^{dk_{n+1}(v)}}F_J\zeta_{\ell^{n+1}}^{\lambda_J(v)}\]
where we have implicitly used that $Q^{k_{n+1}(v)}v \equiv v \pmod{\ell^{n+1}}$. We denote cyclic permutations by
\[\tau(J) = (I_2,I_3,\dots,I_\rho,I_1)\]
and if $k_n(v)|\rho$, we note that
\begin{equation}\label{eqn: lambda_j under cyclic permutations}
    \lambda_{\tau J}(v) \equiv \lambda_J(Qv) \pmod{\ell^{n}}.
\end{equation}

\begin{notation}\label{notation: notation for tuples}
We will argue by considering each tuple along with its cyclic permutations. To that end, we fix some notation that we will use repeatedly. Let $K = (I_1,\dots,I_\rho)$ be a tuple of length $\rho$ such that it is non periodic.\footnote{i.e., the tuples $\tau^i J$ are pairwise distinct for $1 \leq i < \rho$.} For any $\delta = r\rho \in \mathbb N$, we define $J_{K}(\delta) = (I_1,\dots,I_\delta) \coloneqq (K,\dots,K)$ to be the tuple of length $\delta$ where $K$ is concatenated to itself $r$ times. We suppose that $r = r_0\ell^s$ with $r_0$ co-prime to $\ell$.
\end{notation}

We need one more lemma (which will in fact control the rate of congruence) before the proof of Theorem \ref{thm: Main theorem, abstract, scalar}. 

\begin{lemma}\label{lem: sum of q-conj is zero}
For $n\geq 0$, suppose $\rho$ is an integer multiple of $k_n$. For any $w\in\mathbb Z_\ell$,
\[S_{\rho,n}(w) \coloneqq \sum_{i=1}^{\rho}\zeta_{\ell^n}^{q^iw} \equiv 0 \pmod{\rho}.\]
\end{lemma}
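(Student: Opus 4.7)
The plan is to group the sum according to the $q$-orbit of $w$ in $\mathbb Z/\ell^n\mathbb Z$. The sequence $i\mapsto q^iw\pmod{\ell^n}$ is purely periodic of period $k_n(w)$, and since $q^{k_n}\equiv 1\pmod{\ell^n}$ forces $q^{k_n}w\equiv w$ for every $w\in\mathbb Z_\ell$, one has $k_n(w)\mid k_n\mid \rho$. Consequently the sum collapses to
\[
S_{\rho,n}(w) \;=\; \frac{\rho}{k_n(w)}\,T_n(w), \qquad T_n(w)\;:=\;\sum_{i=1}^{k_n(w)}\zeta_{\ell^n}^{q^iw},
\]
so it suffices to show $k_n(w)\mid T_n(w)$ inside $\mathbb Z[\zeta_{\ell^n}]$.

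The case $k_n(w)=1$ is immediate, so I will assume $k_n(w)>1$ and establish the stronger fact $T_n(w)=0$. Write $w=\ell^s u$ with $u\in\mathbb Z_\ell^{\times}$ and set $d:=n-s\geq 1$; then $\zeta_{\ell^n}^{w}=\zeta_{\ell^d}^{u}$ is a primitive $\ell^d$-th root of unity, and $k_n(w)$ equals the order of $q$ in $(\mathbb Z/\ell^d\mathbb Z)^{\times}$. The standing hypothesis $q\equiv 1\pmod{\ell}$ places $\langle q\rangle$ inside the Sylow $\ell$-subgroup $1+\ell\mathbb Z/\ell^d\mathbb Z$, which is cyclic since $\ell$ is odd. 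A cyclic $\ell$-group has a unique subgroup of each order, so $\langle q\rangle=1+\ell^{r}\mathbb Z/\ell^{d}\mathbb Z$ for some integer $r$, and the assumption $k_n(w)>1$ yields $r<d$.

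Parametrising $h\in\langle q\rangle$ as $h=1+\ell^{r}j$ for $0\leq j<\ell^{d-r}$, a direct calculation gives
\[
T_n(w)\;=\;\sum_{h\in\langle q\rangle}\zeta_{\ell^d}^{uh}\;=\;\zeta_{\ell^d}^{u}\sum_{j=0}^{\ell^{d-r}-1}\zeta_{\ell^{d-r}}^{uj}\;=\;0,
\]
where the vanishing is the familiar fact that a complete sum of $N$-th roots of unity is zero for $N>1$ (here $N=\ell^{d-r}$, and $\zeta_{\ell^{d-r}}^{u}$ is primitive because $u$ is a unit modulo $\ell^{d-r}$ and $d-r\geq 1$). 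Combined with the first paragraph, this gives $S_{\rho,n}(w)=0$, proving the lemma. There is no substantial obstacle in the argument; the one structural input doing the work is the cyclicity of the pro-$\ell$ part of $\mathbb Z_\ell^{\times}$ for odd $\ell$, which is precisely what allows $\langle q\rangle$ to be captured by a single ``level'' $r$ and reduces the orbit sum to a geometric series.
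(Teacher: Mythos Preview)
Your proof is correct and follows essentially the same route as the paper's: both reduce to a single period of the orbit and then show that when the orbit has length $>1$ the terms $\zeta_{\ell^n}^{q^iw}$ run over a complete set of $\ell^{d-r}$-th roots of a fixed element, so their sum vanishes. The only cosmetic difference is that you invoke the cyclicity of $1+\ell\mathbb Z/\ell^d\mathbb Z$ directly to pin down $\langle q\rangle=1+\ell^r\mathbb Z/\ell^d\mathbb Z$, whereas the paper reaches the same conclusion via the $\ell$-adic logarithm (writing $\log q=\ell^{\alpha}x$ with $x$ a unit, so your $r$ is the paper's $\alpha$); the two are equivalent.
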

\begin{proof}
Let $w = \ell^mw_0$ with $w_0$ a unit. Since $q^{k_n} \equiv 1 \pmod{\ell^n}$ and $\rho/k_n \in \mathbb Z$, we see that
\[S_{\rho,n}(w) = \sum_{i=1}^{\rho}\zeta_{\ell^{n-m}}^{q^iw_0} = \frac{\rho}{k_{n-m}}S_{k_{n-m},n-m}(w_0)\]
where we use the convention that $\zeta_{-m} = 1$ if $m\geq 0$. Therefore, we can suppose that $w$ is a unit and $\rho = k_n$ without loss of generality. Let $\log q = \ell^\alpha x$ with $x$ a unit so that $q^i-1 = i\ell^\alpha x \pmod{\ell^{\alpha+1}}$. We now have two cases to consider. Either $\alpha \geq n$ in which case $\zeta_{\ell^n}^{q^iw} = \zeta_{\ell^n}^w$ and
\[S_{\rho,n}(w) = \rho\zeta_{\ell^n}^w \equiv 0 \pmod{\rho}\]
or $\alpha < n$. In this second case, note that the $\zeta_{\ell^n}^{q^iw}$ are all pairwise distinct for $i\leq k_n = \ell^{n-\alpha}$: 

If $1 \leq j < i \leq \ell^{n-\alpha}$, then \[i-j  < \ell^{n-\alpha} \implies \zeta_{\ell^{n}}^{(q^i-q^j)w} = \zeta_{\ell^{n}}^{(i-j)w\ell^\alpha x + \dots} \neq 1.\]
In fact, the $\zeta_{\ell^n}^{q^iw}$ are a complete set of roots for the polynomial $z^{\ell^{n-\alpha}} = \zeta_{\ell^\alpha}^w$ and $S_{\rho,n}(w)$ is equal to the linear term of this polynomial which is $0$ thus completing the proof.

\end{proof}

\begin{proof}[Proof of Theorem \ref{thm: Main theorem, abstract, scalar}]
For this proof, recall that $Q = q\Id$ is a scalar matrix so that $k_n(v) = k_n$ for all primitive $v$. We reduce the congruence on the characteristic polynomials $p_{n,v}$ to a congruence on traces using the exponential identity (equation \ref{eqn: det to trace})
\[p_{n,v}(y) = \exp\left(-\sum_{d\geq0}\tr(A_n^d(v))\frac{y^d}{d}\right)\]
as in the proof of Theorem \ref{thm: Arnold-Zarelua}. Upon fixing $n$ such that $k_{n+1} = \ell k_n$,it suffices to show the congruence
\[t_n \coloneqq \tr(A_{n+1}^d(v)) - \tr(A_n^d(v)) \equiv 0 \pmod{dk_{n+1}}.\]
We will consider the contributions to $t_n$ from each tuple and its cyclic permutations. In the notation of \ref{notation: notation for tuples}, we take $\delta = dk_{n+1}$ and $J = J_{K}(dk_{n+1})$ and if $\ell|r$, $J_0 = J_K(dk_n)$. Note that
\begin{equation*}
    \lambda_J(v) = \sum_{i=1}^{\rho}\langle I_i,q^{-i}\left(1 + q^{-\rho} + \dots + q^{-(r-1)\rho}\right)v \rangle = \frac{q^{-r\rho}-1}{q^{-\rho}-1}\lambda_K(v).
\end{equation*}

Since $q^i-1 = i\log(q) + \frac{1}{2}(i\log(q))^2 + \dots$ is exactly divisible by $i\log(q)$, there exists some $w \in \mathbb Z_\ell$ so that
\begin{equation}\label{eqn: lambda_j in terms of lambda_k}
    \frac{q^{-r\rho}-1}{q^{-\rho}-1} = \frac{\ell^s\rho}{\rho}w = \ell^sw \implies \lambda_J(v) = \ell^sw\lambda_K(v).
\end{equation}
Moreover, there exists some $y\in\mathbb Z_\ell$ so that $q^{-dk_n} \equiv 1 + \ell^ny \pmod{\ell^{n+1}}$ and therefore
\begin{equation}\label{eqn: lambda_j in terms of lambda_j0}
    \sum_{i=1}^{\ell-1}q^{-idk_n} \equiv \ell + \ell^ny\sum_{i=0}^{\ell-1}i \equiv \ell + \ell^{n+1}y\frac{\ell-1}{2} \pmod{\ell^{n+1}} \implies \lambda_J(v) \equiv \ell\lambda_{J_0}(v) \pmod{\ell^{n+1}}.
\end{equation}

We now have to consider two cases.

\textit{First, suppose $s=0$.} In this case, the only contributions from tuples that are repetitions of $K$ and its cyclic permutations comes from $\tr(A_{n+1}^d(v))$ and is of the form
\[\sum_{i=1}^\rho \tr(F_{\tau^iJ})\zeta_{\ell^{n+1}}^{\lambda_{\tau^iJ}(v)} = \tr(F_J)\sum_{i=1}^{\rho}\zeta_{\ell^{n+1}}^{\lambda_J(q^iv)} = \tr(F_J)\sum_{i=1}^{\rho}\zeta_{\ell^{n+1}}^{q^iw\lambda_K(v)}\]
where for the first equality, we use that the trace is invariant under cyclic permutations and equation \ref{eqn: lambda_j under cyclic permutations} while for the second equality, we use equation \ref{eqn: lambda_j in terms of lambda_k} above and that $s=0$ by assumption. Now, $r\rho = dk_{n+1}$ and since $r$ is co-prime to $\ell$, $k_{n+1}$ being a $\ell$-power necessarily divides $\rho$. In fact, $\rho$ and $dk_{n+1}$ have the same $\ell$-adic valuation. Thus, we can apply Lemma \ref{lem: sum of q-conj is zero} to conclude
\[\sum_{i=1}^\rho \tr(F_{\tau^iJ})\zeta_{\ell^{n+1}}^{\lambda_{\tau^iJ}(v)} \equiv 0 \pmod{\rho} \iff \sum_{i=1}^\rho \tr(F_{\tau^iJ})\zeta_{\ell^{n+1}}^{\lambda_{\tau^iJ}(v)} \equiv 0 \pmod{dk_{n+1}}.\]
\textit{Next, suppose $s>0$.} In this case, we will have contributions from both $\tr(A_{n+1}^d(v))$ and $\tr(A_n^d(v))$ and they are of the form
\begin{align*}
    \sum_{i=1}^\rho\tr(F_{\tau^iJ})\zeta_{\ell^{n+1}}^{\lambda_{\tau^iJ}(v)} - \sum_{i=1}^\rho\tr(F_{\tau^iJ_0})\zeta_{\ell^{n}}^{\lambda_{\tau^iJ_0}(v)} &= \left(\tr(F_K^r) - \tr(F_K^{r/\ell})\right)\sum_{i=1}^\rho\zeta_{\ell^n}^{\lambda_{J}(q^{i}v)}\\ &= \left(\tr(F_K^r) - \tr(F_K^{r/\ell})\right)\sum_{i=1}^\rho\zeta_{\ell^{n+1}}^{q^i\ell^sw\lambda_K(v)}\\
    &\equiv 0 \pmod{r\rho = dk_{n+1}}
\end{align*}
where the first equality follows from invariance of trace under cyclic permutations and equation \ref{eqn: lambda_j in terms of lambda_j0} while the second equation follows from equation \ref{eqn: lambda_j in terms of lambda_k}. For the last congruence, Theorem \ref{thm: Arnold-Zarelua} implies that
\[\tr(F_K^r) - \tr(F_K^{r/\ell}) \equiv 0 \pmod{r}.\]
Moreover, since $dk_{n+1} = r\rho$, we see that $\rho$ is divisible by $dk_{n+1}\ell^{-s}$ and in particular by $k_{n+1-s}$. Therefore, we can apply Lemma \ref{lem: sum of q-conj is zero} to conclude \(\sum_{i=1}^\rho\zeta_{\ell^{n+1-s}}^{q^iw\lambda_K(v)} \equiv 0 \pmod{\rho}.\)

\end{proof}

\begin{remark}\label{rmk: failure in general case, sum of rou}
We remark that the failure of this proof for the general case (see Remark \ref{rmk: failure of convergence, example}) happens exactly at Lemma \ref{lem: sum of q-conj is zero}. If $Q$ is not scalar, it is no longer true that $\lambda_J(Qv) = Q\lambda_J(v)$ and consequently, there exist examples (with $\lambda$ a linear form) such that
\[S_n(\lambda;v) \coloneqq \sum_{j=1}^{k_n(v)}\zeta_{\ell^n}^{\lambda(Q^{-j}v)} \not\equiv 0 \pmod{k_n(v)}.\]
Nevertheless, the above proof shows that if the $S_n(\lambda_J;v) \to 0$ as $n \to \infty$, then the characteristic polynomials $p_{n,v}(y)$ will also converge as $n \to \infty$.  If $\lambda_J(\log(Q)v) \neq 0$, a variation of Lemma \ref{lem: sum of q-conj is zero} still applies to $S_n(\lambda_J;v)$. In fact, numerical evidence supports the vanishing of the limit (for $\lambda_J$ an arbitrary linear form) but we do not know how to prove it. \end{remark}

From now on, we again let $Q \equiv \Id \pmod{\ell}$ be a general matrix. We recall some notation before the proof of Theorem \ref{thm: Main theorem, abstract, general}. We let $V = \mathbb Z_\ell^b$ be a free $\mathbb Z_\ell$ module, $V_n = V/\ell^nV$, $\PP(V_n)$ to be the primitive vectors in $V_n$ and $\PP(V_n)/\sim$ to be a set of representatives under the action by $Q$. The characteristic polynomials we are interested in are
\[r_n(y) = \prod_{v \in \PP((\mathbb Z/\ell^n\mathbb Z)^b)/\sim}p_{n,v}(y^{k_n(v)/k_{n}}).\]
We also fix $n$ sufficiently large and define (in the notation of Lemma \ref{lem: order of Q})
\[V_e = \{v \in V: \frac{k_n(v)}{k_n}|\ell^e \iff \beta_v \geq \beta_0-e \iff Xv\equiv 0 \pmod{\ell^{\beta_0-e}}\}\subset V.\]
By the last equivalent condition, we see that $V_e$ is a (non-empty) submodule of $V$. Since $Q$ commutes with $\log(Q)$ and hence also $X$, we see that $Q$ preserves $V_e$. When $Q = q\Id$, $V_e = V$ since $\beta_v = \beta_0$ for all primitive $v$. Also define $V_{e,n}$ to be the image of $V_e$ in $V/\ell^n V$ under the reduction map. Note that, in general, $V_e \not\cong (\mathbb Z/\ell^n\mathbb Z)^c$ for some $c$ and is only a-priori a finite $\mathbb Z/\ell^n\mathbb Z$ module.

So, let $M$ be an arbitrary finite $\mathbb Z_\ell$ module and $n \geq 0$ be the smallest value such that $\ell^n M = 0$.  An element $v\in M$ is said to be primitive (generalizing our usual notion) when $\ell^{n-1}v \neq 0$ and the set of primitive elements is denoted $\PP(M)$. Our two definitions of primitive are compatible in the sense that
\[\PP(V_{e,n}) = \PP(V/\ell^n V)\cap V_{e,n}.\]

We need one more lemma (analogous to Lemma \ref{lem: sum of q-conj is zero} and also the determining factor for the rate of convergence) before the proof of Theorem \ref{thm: Main theorem, abstract, general}. 

\begin{lemma}\label{lem: sum over all vectors is divisible}
Let $M$ be as above with 
\[\chi: M \to \overline{\mathbb Z}_\ell^\times\]
a character. Then, we have the congruence
\[\sum_{v \in \PP(M)}\chi(v) \equiv 0 \pmod{\ell^{n-1}}.\]
If $M = (\mathbb Z_\ell/\ell^n\mathbb Z_\ell)^b$, we have the stronger congruence
\[\sum_{v \in \PP(M)}\chi(v) \equiv 0 \pmod{\ell^{(n-1)b}}.\]
\end{lemma}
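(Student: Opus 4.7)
The plan is to first reduce to the case $M = (\mathbb{Z}/\ell^n\mathbb{Z})^b$ using the structure theorem for finite $\mathbb{Z}_\ell$-modules, and then prove that case directly by inclusion–exclusion on character sums.

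By the structure theorem, I would decompose $M \cong M' \oplus M''$, where $M' \cong (\mathbb{Z}/\ell^n\mathbb{Z})^c$ is the direct sum of the cyclic factors of maximal order (with $c \geq 1$ because $n$ was chosen minimal with $\ell^n M = 0$) and $M''$ consists of the smaller cyclic summands. The key observation is that the condition $\ell^{n-1}v \neq 0$ depends only on the projection of $v$ to $M'$: an element $v = (v',v'') \in M' \oplus M''$ is primitive if and only if $v'$ is primitive in $M'$. Writing $\chi = \chi' \otimes \chi''$ under the decomposition, the sum then factors as
\[\sum_{v \in \PP(M)} \chi(v) \;=\; \Bigl(\sum_{v' \in \PP(M')}\chi'(v')\Bigr)\Bigl(\sum_{v'' \in M''}\chi''(v'')\Bigr).\]
If $\chi''$ is nontrivial the second factor vanishes, and we are done; if $\chi''$ is trivial, the second factor is $|M''|$, and the desired divisibility by $\ell^{n-1}$ will follow once we establish the stronger $\ell^{(n-1)c}$-divisibility in the case $M = M'$ (since $c \geq 1$).

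For the remaining case $M = (\mathbb{Z}/\ell^n\mathbb{Z})^b$, I would argue as follows. Writing $\chi(e_i) = \zeta_i$ a root of unity of some $\ell$-power order, both of the sums
\[\sum_{v \in M}\chi(v) \;=\; \prod_{i=1}^b \sum_{v_i = 0}^{\ell^n - 1}\zeta_i^{v_i}, \qquad \sum_{v \in \ell M}\chi(v) \;=\; \prod_{i=1}^b \sum_{w_i = 0}^{\ell^{n-1}-1}\zeta_i^{\ell w_i}\]
factor coordinate-wise into geometric sums. Since the non-primitive vectors in $M$ are exactly those in $\ell M$, the primitive sum is their difference. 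Each factor in the first product vanishes unless $\zeta_i = 1$ (in which case it equals $\ell^n$), and each factor in the second product vanishes unless $\zeta_i^\ell = 1$ (in which case it equals $\ell^{n-1}$). I then split into three cases: (i) $\chi$ trivial, where the primitive sum is $\ell^{nb} - \ell^{(n-1)b} = \ell^{(n-1)b}(\ell^b - 1)$; (ii) $\chi$ nontrivial but $\chi^\ell$ trivial, where the primitive sum is $0 - \ell^{(n-1)b}$; (iii) some $\zeta_i^\ell \neq 1$, where both sums vanish. In every case the answer is divisible by $\ell^{(n-1)b}$, which yields the stronger congruence.

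There is no substantive obstacle here; the lemma is essentially a careful Möbius-style inversion for character sums on finite abelian $\ell$-groups, and the only delicate point is making sure the exponent $n-1$ (respectively $(n-1)b$) is realized in the edge case where $\chi$ is a nontrivial character of order $\ell$ on the maximal-order summand, since that is the case where the cancellation is weakest.
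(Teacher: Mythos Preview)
Your argument is correct, but it takes a more explicit route than the paper. The paper never invokes the structure theorem or coordinate-wise geometric sums: it simply observes that for \emph{any} finite $\mathbb Z_\ell$-module $M$ and character $\chi$, the full sum $S_M = \sum_{v\in M}\chi(v)$ is either $|M|$ (if $\chi$ is trivial) or $0$ (otherwise), hence always $\equiv 0 \pmod{|M|}$. Writing $N=\{v:\ell^{n-1}v=0\}$ so that $\PP(M)=M\setminus N$, the primitive sum is $S_M-S_N$; since $|M|$ and $|N|$ are both $\ell$-powers of size at least $\ell^{n-1}$, the general congruence follows in one line, and the special case $M=(\mathbb Z/\ell^n)^b$ just upgrades the size bounds to $\ell^{nb}$ and $\ell^{(n-1)b}$. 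Your reduction via $M=M'\oplus M''$ and the three-case geometric-sum analysis reach the same conclusion and even make the edge case (nontrivial $\chi$ of order $\ell$) visible, but the paper's argument is shorter and avoids choosing a basis altogether.
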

\begin{proof}
Let $|M| = m$, note that \(S_M \coloneqq \sum_{v \in M}\chi(v) \equiv 0 \pmod{m}:\) There are two cases to consider. First, if $\chi$ is the trivial character, then $S_M = m$ and the congruence is clear. Second, if $\chi$ is not trivial, we can find some $m_0 \in M$ so that $\chi(m_0) \neq 1$ and \(S_M = \chi(m_0)S_M \implies S_M = 0 \equiv 0 \pmod{m}.\)

Define 
\[N = \{v \in M: \ell^{n-1}v = 0\} \subset M\]
so that $\PP(M) = M - N$. The module $M$ has size at least $\ell^n$ and the module $N$ has size at least $\ell^{n-1}$. Therefore, we have
\[\sum_{v\in\PP(M)}\chi(v) = \sum_{v\in M}\chi(v) - \sum_{w\in N}\chi(w) = S_M - S_N \equiv 0 \pmod{\ell^{n-1}}\]
since $|M| \equiv |N| \equiv 0 \pmod{\ell^{n-1}}$. 

If $M = (\mathbb Z_\ell/\ell^n\mathbb Z_\ell)^b$ so that $N = (\mathbb Z_\ell/\ell^{n-1}\mathbb Z_\ell)^b$, then the above argument shows the stronger congruence
\[\sum_{v\in\PP(M)}\chi(v) = \sum_{v\in M}\chi(v) - \sum_{w\in N}\chi(w) = S_M - S_N \equiv 0 \pmod{\ell^{(n-1)b}}.\]

\end{proof}

We now prove Theorem \ref{thm: Main theorem, abstract, general}, along the same general lines as the proof of Theorem \ref{thm: Main theorem, abstract, scalar}.

\begin{proof}[Proof of Theorem \ref{thm: Main theorem, abstract, general}]

By the exponential identity \ref{eqn: det to trace}, we have
\[r_n(y) = \exp\left(-\sum_{v \in \PP(V/\ell^n V)/\sim}\sum_{f\geq 0}\frac{\tr A_n^f(v)}{f}y^{\frac{fk_n(v)}{k_n}}\right).\]
Let us fix some $d = d_0\ell^e$ (with $d_0$ co-prime to $\ell$) and collect the terms corresponding to $y^d$ so that with
\[C_{d,n} = \sum_{v \in \PP(V_{e,n})/\sim}\frac{k_n(v)}{dk_n}\tr A_n^{\frac{dk_n}{k_n(v)}}(v) \text{, we have } r_n(y) = \exp\left(-\sum_{d \geq 0}C_{d,n}y^d\right).\]
As in the proof of Theorem \ref{thm: Arnold-Zarelua}, the congruence
\[r_{n+1}(y) \equiv r_n^{\ell^{b-1}}(y) \pmod{\md},\]
is reduced to the congruence
\[C_{d,n+1} \equiv \ell^{b-1}C_{d,n} \pmod{\md}.\]
Since a representative $v \in \PP(V_{e,n})/\sim$ represents $k_n(v)$ many vectors in $\PP(V_{e,n})$ and $A_n(Qv)$ is conjugate to $A_n(v)$ so that their powers have the same traces, we can express $C_{d,n}$ as a sum over \textit{all} primitive vectors by
\[C_{d,n} = \sum_{v \in \PP(V_{e,n})}\frac{1}{dk_n}\tr A_n^{\frac{dk_n}{k_n(v)}}(v).\]
Therefore we are reduced to proving the congruence
\[t_n \coloneqq \sum_{v \in \PP(V_{e,n+1})}\frac{1}{dk_{n+1}}\tr A_{n+1}^{\frac{dk_{n+1}}{k_{n+1}(v)}}(v)-\sum_{v \in \PP(V_{e,n})}\frac{\ell^{b}}{dk_{n+1}}\tr A_n^{\frac{dk_n}{k_n(v)}}(v) \equiv 0 \pmod{\md}\]
where we have implicitly used the assumption that $n$ is sufficiently large so that $k_{n+1} = \ell k_n$. Since every vector in $\PP(V_{e,n})$ has $\ell^b$ many lifts to $\PP(V_{e,n+1})$, we also have
\[t_n = \frac{1}{dk_{n+1}}\sum_{v \in \PP(V_{e,n+1})}\left(\tr A_{n+1}^{\frac{dk_{n+1}}{k_{n+1}(v)}}(v) -\tr A_{n}^{\frac{dk_n}{k_n(v)}}(v) \right).\]
Note that in the expansion
\[\tr A_n^{\frac{dk_n}{k_n(v)}}(v) = \sum_{J \in (\mathbb N^b)^{dk_n}}\tr(F_J)\zeta_{\ell^n}^{\lambda_J(v)},\]
the tuples all have size $dk_n$ independent of $v$. As before, we will argue by fixing a tuple $K$ and considering the contributions from tuples that are multiples of $K$ and their cyclic permutations. In the notation of \ref{notation: notation for tuples}, let $J = J_{K}(dk_{n+1})$ and when $\ell|r$, $J_0 = J_K(dk_n)$.

\textit{First, we suppose that $s = 0$}. In this case, the only contribution to $t_n$ from $K$ will be through $J$ and will be of the form
\[\frac{1}{dk_{n+1}}\sum_{v \in \PP(V_{e,n+1})}\tr(F_J)\zeta_{\ell^{n+1}}^{\lambda_J(v)}.\]
We note that $\zeta_{\ell^{n+1}}^{\lambda_J(v)}$ is a character on $V_{e,n+1}$ and therefore, by Lemma \ref{lem: sum over all vectors is divisible}, there exists some $T_{\lambda_J} \in \mathbb Z_\ell$ such that
\[\frac{1}{dk_{n+1}}\sum_{v \in \PP(V_{e,n+1})}\tr(F_J)\zeta_{\ell^{n+1}}^{\lambda_J(v)} = \frac{\ell^{n}}{dk_{n+1}}T_{\lambda_J}.\]
Moreover, for any cyclic permutation $\tau^iJ$ of $J$, the corresponding contribution is of the same form as before since $Q^i$ permutes $\PP(V_{e,n+1})$:
\[\frac{1}{dk_{n+1}}\sum_{v \in \PP(V_{e,n+1})}\tr(F_{\tau^i J})\zeta_{\ell^{n+1}}^{\lambda_J(Q^iv)} = \frac{1}{dk_{n+1}}\sum_{v \in \PP(V_{e,n+1})}\tr(F_J)\zeta_{\ell^{n+1}}^{\lambda_J(v)} = \frac{\ell^{n}}{dk_{n+1}}T_{\lambda_J}.\]
Therefore, the contribution from all the cyclic permutations of $J$ is together equal to
\[\frac{\rho\ell^{n}}{dk_{n+1}}\tr(F_J)T_{\lambda_J} \equiv 0 \pmod{\ell^n}\]
since the $\ell$-adic valuation of $\rho$ is equal to the $\ell$-adic valuation of $dk_{n+1}$.

\textit{Next, suppose $s>0$}. In this case, the contribution from $K$ will be through $J$ and $J_0$. Since $v \in V_e$, $dk_{n+1}$ is divisible by $k_{n+1}(v)$ so that $Q^{-idk_n}v = v + i\ell^nYv$ for some $Y \in M_b(\mathbb Z_\ell)$ and 
\[\left(\Id + Q^{-dk_n} + \dots + Q^{-(\ell-1)dk_n}\right)v = \ell v + i\ell^n\sum_{i=0}^{\ell-1}Yv = \ell v + \ell^{n+1}\frac{\ell-1}{2}Yv \equiv \ell v \pmod{\ell^{n+1}}.\]
This implies that
\[    \lambda_J(v) = \sum_{i=1}^{dk_{n}}\langle I_i,q^{-i}\left(1 + Q^{-dk_n} + \dots + Q^{-(\ell-1)dk_n}\right)v \rangle   \equiv \ell\lambda_{J_0}(v) \pmod{\ell^{n+1}}\]
which is equivalent to $\zeta_{\ell^{n+1}}^{\lambda_J(v)} = \zeta_{\ell^n}^{\lambda_{J_0}(v)}$. Therefore, the contribution from $J,J_0$ in $t_n$ is of the form
\[\frac{1}{dk_{n+1}}\left(\tr(F_k^r)-\tr(F_k^{r/\ell})\right)\sum_{v\in \PP(V_{e,n+1})}\zeta_{\ell^{n+1}}^{\lambda_{J}(v)} = \frac{\ell^n}{dk_{n+1}}\left(\tr(F_k^r)-\tr(F_k^{r/\ell})\right)T_{\lambda_J}.\]
As above, the cyclic permutations of $K$ give rise to exactly the same contribution so that the total contribution from all cyclic permutations of $K$ is
\[\frac{\rho\ell^{n}}{dk_{n+1}}\left(\tr(F_k^r)-\tr(F_k^{r/\ell})\right)T_{\lambda_J} \equiv 0 \pmod{\ell^{n}}\]
since $\left(\tr(F_k^r)-\tr(F_k^{r/\ell})\right)$ is divisible by $r$ by Theorem \ref{thm: Arnold-Zarelua} and $r\rho = dk_{n+1}$.

\textbf{When $Q = q\Id$}, the proof is exactly the same as above except that we have the stronger congruence
\[\sum_{v \in \PP(V_{e,n+1})}\tr(F_J)\zeta_{\ell^{n+1}}^{\lambda_J(v)} \equiv 0 \pmod{\ell^{nb}}.\]
This follows from the second part of Lemma \ref{lem: sum over all vectors is divisible} since $V_e = V = \mathbb Z_\ell^b$ in this case and $V_{e,n+1} = (\mathbb Z_\ell/\ell^{n+1}\mathbb Z_\ell)^b$.
\end{proof}

\begin{remark}
As one sees from the proof, the modulus of the congruence in Theorem \ref{thm: Main theorem, abstract, general} depends on the structure of $V_{e,n+1}$.
\end{remark}

\section{Explicit examples}\label{sec: examples}

In this section, we prove that the normalized eigenvalues of the characteristic polynomials $h_{n,v}(x)$ defined in the proof of Theorem \ref{thm: main thm, geom} are independent of $n$ for $n$ sufficiently large in the following two examples:

\begin{itemize}\label{defn: Fermat/A-S curves}
    \item \textit{Fermat Curves}: This is the family of curves defined by the equation 
    \[C_n: x^{\ell^n} + y^{\ell^n} + z^{\ell^n} = 0 \subset \mathbb P^2.\]
    We have maps \[\dots \to C_n \to C_{n-1} \to \dots \to C_1 \cong \mathbb P^1\] with $G_n = \Aut(C_n/C_1) = (\mu_{\ell^n})^2$ and the element $(\zeta_1,\zeta_2)$ acts by $[x:y:z] \to [x\zeta_1:y\zeta_2:z]$.
    \item \textit{Artin-Schreier Curves}: This is the family of curves defined by the projective closure of the equation
    \[C_{n}: y^q - y = x^{\ell^n} \subset \mathbb P^2/\mathbb F_q.\]
    The automorphism group in this case is $G_n = \mathbb F_q\times \mu_{\ell^n}$. An element $(a,\zeta)$ in this group acts on the curve by $(x,y) \to (\zeta x,y+a)$.
\end{itemize}

\begin{remark}
The results of this section work in somewhat greater generality, for instance we don't need to restrict to Fermat or Artin-Schreier curves of degree a power of $\ell$. The results also work for various quotients of these curves such as the superelliptic curves $y^m = x^{\ell^n} + a$.

Since the computations in other cases are exactly analogous, we only deal with the above two cases.

\end{remark}

Throughout this section, we identify characters $\chi: \mu_{\ell^n} \to \overline{\mathbb Z}_\ell$ with vectors $v \in \mathbb Z_\ell$ by $\chi(v): \zeta_{\ell^n} \to \zeta_{\ell^n}^v$. We also fix a compatible family of additive characters $\psi_n: \mathbb F_{q^n} \to \overline{\mathbb Z}_\ell$ that satisfy $\psi_n = \tr(\mathbb F_{q^n}/\mathbb F_q)\circ\psi_1$.

In both of the above families of curves, we can decompose $M_n = H^1_{\text{\'et}}(\overline{C}_n,\mathbb Z_\ell)$ into one dimensional eigenspaces $M_n(\chi)$ indexed by characters $\chi$ of $G_n$. In the Fermat curve case, the characters are naturally indexed by $v \in (\mathbb Z/\ell^n\mathbb Z)^2$ while in the second case, the characters are indexed by $(\psi,v)$ where $\psi$ is an additive character of $\mathbb F_q$ and $v \in \mathbb Z/\ell^n\mathbb Z$.

Given a character $\chi: \mu_{\ell^n} \to \overline{\mathbb Z}_\ell$ and $q \equiv 1 \pmod{\ell^n}$, we can define a multiplicative character of $\mathbb F_q^\times$ since the map $x \to x^{\frac{q-1}{\ell^n}}$ induces a surjection
\[\mathbb F_q^\times \to \mu_{\ell^n}(\mathbb F_q) \cong \mu_{\ell^n}\]
and we compose this surjection with $\chi$. By a slight abuse of notation, we also denote this character by $\chi$.

The following well-known theorem (\cite[Corollary 2.2 and Lemma 2.3]{Katz}) identifies the eigenvalues of the Frobenius $\sigma_q$ on $M(\chi)$ with Gauss and Jacobi sums respectively.

\begin{theorem}\label{thm: Katz}
We assume that $q\equiv 1 \pmod{\ell^n}$.
\begin{itemize}
    \item For the Fermat curves $C_n$, let $\eta = (\chi,\chi_2)$ be a character of $G_n = (\mu_{\ell^n})^2$. The eigenvalues of $\sigma_q$ on the eigenspace $M_n(\eta)$ are given by the Jacobi sum
    \[-J_q(\chi_1,\chi_2) = -\sum_{x \in \mathbb F_q}\chi_1(x)\chi_2(1-x).\]
    \item For the Artin-Schreier curves, let $\eta = (\psi,\chi)$ be a character of $G_n = \mathbb F_q\times\mu_{\ell^n}$. The eigenvalues of $\sigma_q$ on the eigenspace $M_n(\eta)$ are given by the Gauss sums
    \[-g_q(\psi,\chi) = -\sum_{x \in \mathbb F_q}\psi(x)\chi(x).\]
\end{itemize}
\end{theorem}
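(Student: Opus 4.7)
My plan is to combine the Grothendieck--Lefschetz trace formula with character-theoretic orthogonality on $G_n$. Since $q\equiv 1\pmod{\ell^n}$, the Frobenius $\sigma_q$ acts trivially on the character group of $G_n$, so $\sigma_q$ preserves each eigenspace $M_n(\eta)$ rather than permuting different ones. Combined with the fact that each such eigenspace is one-dimensional for non-trivial $\eta$ (a Riemann--Hurwitz computation, in the spirit of Theorem~\ref{thm: structure of reps}), $\sigma_q$ must act on $M_n(\eta)$ as multiplication by a single scalar $\alpha(\eta)$, and the goal is to identify this scalar.

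The cleanest route is to realize $M_n(\eta)$ as the compactly supported \'etale cohomology of a rank-one lisse sheaf on a base curve, so that Grothendieck--Lefschetz directly produces the desired character sum. For the Fermat curve, after a suitable rational change of coordinates, $C_n\to\mathbb P^1$ is exhibited as an iterated $\mu_{\ell^n}^2$-cover branched at $\{0,1,\infty\}$; the standard decomposition of the pushforward along a Galois cover then identifies $M_n(\chi_1,\chi_2)$ with $H^1_c\bigl(\mathbb G_m\setminus\{1\},\,\mathcal L_{\chi_1}(t)\otimes\mathcal L_{\chi_2}(1-t)\bigr)$, where $\mathcal L_{\chi}$ denotes the Kummer lisse sheaf attached to the character $\chi$ of $\mathbb F_q^\times$. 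Grothendieck--Lefschetz then yields
\[
\tr\bigl(\sigma_q\,\bigm|\,H^1_c\bigr)\;=\;-\sum_{t\in\mathbb F_q,\,t\neq 0,1}\chi_1(t)\chi_2(1-t)\;=\;-J_q(\chi_1,\chi_2).
\]

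The Artin--Schreier case proceeds identically, with the multiplicative Kummer factor replaced by an Artin--Schreier sheaf $\mathcal L_\psi$. Setting $u = x^{\ell^n}$ (which is a legitimate descent of the $\mu_{\ell^n}$-action since $q\equiv 1\pmod{\ell^n}$), the $(\psi,\chi)$-eigenspace identifies with $H^1_c(\mathbb G_m,\,\mathcal L_\psi(u)\otimes\mathcal L_\chi(u))$, and the Lefschetz trace collapses to the Gauss sum $-g_q(\psi,\chi)$.

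The main technical point, in either case, is to verify that the auxiliary cohomology groups $H^0_c$ and $H^2_c$ of the twisted sheaf vanish for non-trivial characters, so that the Lefschetz alternating sum is exactly $-\alpha(\eta)$ and not contaminated by other contributions. Equivalently, one must show that the global Euler--Poincar\'e characteristic of the relevant rank-one sheaf on the open base equals $-1$, which follows from the Grothendieck--Ogg--Shafarevich formula once the tame/wild local monodromy of Kummer and Artin--Schreier sheaves at the boundary points is recorded. This is precisely the routine ramification verification carried out in \cite{Katz}.
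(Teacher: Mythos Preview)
Your argument is correct, but it takes a genuinely different route from the paper's own proof. The paper works entirely at the level of the total curve $C_n$: it uses the elementary orthogonality identities
\[
\sum_{\chi:\mathbb F_q^\times\to\mu_{\ell^n}}\chi(x)=\ell^n\cdot\mathbf 1_{x\text{ is an }\ell^n\text{-th power}},\qquad
\sum_{\psi:\mathbb F_q\to\mu_p}\psi(x)=q\cdot\mathbf 1_{x=y^q-y}
\]
to expand $|C_n(\mathbb F_{q^m})|$ as a sum of Jacobi (resp.\ Gauss) sums indexed by the characters of $G_n$, and then invokes the Weil conjectures over all extensions $\mathbb F_{q^m}$ to match these summands with the Frobenius eigenvalues on the $G_n$-eigenspaces. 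You instead first descend each eigenspace to a rank-one Kummer/Artin--Schreier sheaf on the quotient $\mathbb P^1\setminus\{0,1,\infty\}$ or $\mathbb G_m$ and apply Grothendieck--Lefschetz there. What this buys you is that you never need to vary $m$ or appeal to the Weil conjectures: once $H^0_c$ and $H^2_c$ of the twisted sheaf vanish (which, as you note, is immediate for a nontrivial rank-one lisse sheaf on an affine curve), the trace formula over $\mathbb F_q$ alone pins down the single eigenvalue. The paper's approach, by contrast, is more elementary in its inputs (no sheaf formalism, just classical character sums) at the cost of needing the full zeta-function machinery to separate the eigenvalues at the end. Both arguments are standard and essentially the two faces of \cite{Katz}.
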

\begin{proof}
We sketch the proof for completeness. In the case of Fermat curves, we would like to count points on the affine curve $x^{\ell^n} + y^{\ell^n} = -1$ while in the case of Artin-Schreier curves, we would like to count points on $y^q - y = x^{\ell^n}$.

We have the identities
\[\sum_{\chi: \mathbb F_q^{\times} \to \mu_{\ell^n}}\chi(x) = \begin{cases} \ell^n &\text{ if } x = y^{\ell^n}\\0 &\text{ otherwise} \end{cases}\]
and
\[\sum_{\psi: \mathbb F_q \to \mu_q}\psi(x) = \begin{cases}q &\text{ if } x = y^q - y\\ 0 &\text{ otherwise}\end{cases}.\]
We can use these identities to test if an element $x \in \mathbb F_q$ is a $\ell^n$-th power or of the form $y^q - y$ and therefore use it to count points:

For the Fermat Curve, we have
\[C_n(F_q) = \sum_{z+w = -1} \sum_{\chi_1,\chi_2: \mathbb F_q^\times \to \mu_{\ell^n}}\chi_1(x)\chi_2(y) \]
while for Artin-Schreier curves:
\[C_n(\mathbb F_q) = \sum_{z\in \mathbb F_q}\sum_{\psi,\chi}\psi(z)\chi(z).\]
Exchanging the summation, this shows that the point counts on the two curves can be expressed in terms of Jacobi and Gauss sums respectively. Finally, we use the Weil-conjectures to identify eigenvalues of the Frobenius action with Jacobi/Gauss sums by varying over all powers of $q$.

\end{proof}

Let us return to the set-up of Theorem \ref{thm: main thm, geom}. The roots of the characteristic polynomial $h_{n,v}(x)$ therefore correspond to $\left(-J_q(\chi_1,\chi_2)\right)^{k_n} = -J_{q^{k_n}}(\chi_1,\chi_2)$  with $v$ corresponding to the character $\chi_1,\chi_2$ and similarly for the Gauss sum in the two cases we are interested in. Put another way, we choose the minimal $q$ so that $q-1$ is exactly divisible by $\ell^n$ and we are looking for a relation between these values for varying $n$.

Luckily, the exact statement we need is a result of Coleman \cite{Coleman} proved using the p-adic Gamma function of Gross-Koblitz \cite{Gross-Koblitz}. Stated in our notation and specialized to our needs, \cite[Theorem 11]{Coleman} takes the following form:

\begin{theorem}[Coleman]\label{thm: Coleman}
Let $v \in \mathbb Z_\ell$, $q=p^f$ be such that $\ell^n$ exactly divides $q-1$. In the notation of the previous theorem, we have
\[ g_{q^\ell}(\psi,\chi_{q^\ell}(v)) = g_q(\psi,\chi_q(v))\chi_q(v)(\ell)c_q\]
for $c_q = c_p^f$ and $c_p = (-1)^{r}p^{\frac{\ell-1}{2}}$ where $r$ depends only on $\ell$.
\end{theorem}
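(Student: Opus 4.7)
The strategy is to apply the Gross--Koblitz formula, which expresses each Gauss sum as a product of values of the Morita $p$-adic gamma function $\Gamma_p$, together with a specific power of a uniformizer $\pi$ satisfying $\pi^{p-1} = -p$. Letting $a_q$ denote the integer in $\{1,\dots,q-2\}$ representing $\chi_q(v)$ under the identification of characters of $\mathbb{F}_q^\times$ with $\mathbb{Z}/(q-1)\mathbb{Z}$, so that $a_q/(q-1) \equiv v/\ell^n \pmod{\mathbb{Z}}$, the Gross--Koblitz formula reads
\[
-g_q(\psi, \chi_q(v)) \;=\; \pi^{s(a_q)} \prod_{i=0}^{f-1} \Gamma_p\!\left(\left\langle \tfrac{p^i v}{\ell^n} \right\rangle\right),
\]
where $\langle \cdot \rangle$ denotes fractional part and $s(\cdot)$ is the base-$p$ digit sum. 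The analogous formula for $g_{q^\ell}(\psi, \chi_{q^\ell}(v))$ involves $\ell f$ factors of $\Gamma_p$ at arguments with denominator $\ell^{n+1}$ (since $\ell^{n+1}$ exactly divides $q^\ell - 1$), with $s(a_{q^\ell})$ in the exponent of $\pi$.

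To compare the two products, the plan is to apply the Dwork--Gross--Koblitz multiplication formula for $\Gamma_p$, which rewrites a product of the shape $\prod_{j=0}^{\ell-1} \Gamma_p\!\left(\tfrac{x+j}{\ell}\right)$ in terms of $\Gamma_p(x)$ and explicit elementary factors involving $\ell$ and $p$. Applied to each of the $\ell f$ factors on the $q^\ell$-side and followed by a reindexing, the length-$\ell f$ product should collapse into the length-$f$ product appearing in $-g_q(\psi,\chi_q(v))$, multiplied by explicit correction terms. The discrepancy $s(a_{q^\ell}) - s(a_q)$ in the $\pi$-exponent can be computed directly from the base-$p$ expansions: writing $a_q$ in base $p$ of length $f$ and $a_{q^\ell} = a_q\cdot \tfrac{q^\ell-1}{q-1}\cdot \tfrac{1}{\ell}$ suitably, one tracks how the digit sum changes under lifting from denominator $\ell^n$ to $\ell^{n+1}$.

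The main obstacle is the combinatorial bookkeeping: one must simultaneously control the $\pi$-exponent, the constants produced at each application of the multiplication formula, and isolate the factor $\chi_q(v)(\ell)$, which should emerge from the way the fractional parts $\langle p^i v/\ell^{n+1}\rangle$ reorganize relative to $\langle p^i v/\ell^n\rangle$ as the denominator of the argument picks up one extra factor of $\ell$. Collecting the elementary factors across all $f$ applications is expected to produce $c_p^f$ with $c_p = (-1)^r p^{(\ell-1)/2}$: the power of $p$ reflects the fact that each use of the multiplication formula contributes $p^{(\ell-1)/2}$, while the sign $(-1)^r$ comes from a parity count driven by the reflection identity $\Gamma_p(x)\Gamma_p(1-x) = \pm 1$, and so depends only on $\ell$ and not on $v$, $q$, or $f$.
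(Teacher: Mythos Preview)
The paper does not give an independent proof of this statement: it simply quotes Coleman's Theorem~11 and records the specialization of parameters ($b=v/\ell^{n+1}$, $d=\ell$, one orbit of size $\ell$, and the resulting form of $c$). Your proposal, by contrast, sketches the \emph{actual} argument underlying Coleman's theorem, going back to the Gross--Koblitz formula and the $p$-adic multiplication formula for $\Gamma_p$. So the two are not really comparable as proofs: the paper delegates, while you attempt to reprove.

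Your outline is on the right track and matches Coleman's own method. The crucial reindexing you allude to really is the heart of the matter and deserves to be made precise: since $\ell^n$ exactly divides $q-1$, one has $p^f \equiv 1 + u\ell^n \pmod{\ell^{n+1}}$ with $u$ a unit, so as $i$ runs over $0,\dots,\ell-1$ the fractional parts $\langle p^{k+if}v/\ell^{n+1}\rangle$ for fixed $k$ run (up to a permutation depending on $uv$) through $\langle (p^kv/\ell^n + j)/\ell\rangle$ for $j=0,\dots,\ell-1$. This is exactly the shape needed for the multiplication formula, and it is also where the factor $\chi_q(v)(\ell)$ appears: the $p$-adic multiplication formula contributes a term of the form $\omega(\ell)^{(\cdot)}$ (a Teichm\"uller power of $\ell$), and summing the exponents over the $f$ applications yields the character value. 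Your description of this step (``should emerge from the way the fractional parts reorganize'') is too vague to stand on its own; if you want a self-contained proof rather than a citation, this is the place to be explicit. The constants $\prod_j\Gamma_p(j/\ell)$ left over from each application of the multiplication formula, together with the $\pi$-exponent bookkeeping, do assemble into $c_p^f$ with $c_p=(-1)^r p^{(\ell-1)/2}$, but again this requires a careful count rather than an expectation.
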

\begin{proof}
In Theorem $11$ of loc. cit., take $b = v/\ell^{n+1}$, $d=\ell$. Note that there is exactly one orbit of size $\ell$ and $c = \left(\sqrt{-p}^{\ell-1}\phi_d(0)\right)^f$, $r = r_\ell + (\ell-1)/2$ in the notation of that paper.
\end{proof}

The following theorem is an immediate consequence of Coleman's theorem and is the required relation.

\begin{theorem}\label{thm: char poly stabilize}
Suppose that $q$ is such that $\ell^n$ exactly divides $q-1$. 
Let $v_1,v_2 \in \mathbb Z_\ell$, $\chi_{q^m}(v_i)$ multiplicative characters of $\mu_{\ell^\infty}\left(\mathbb F_{q^m}\right)$ corresponding to $v_i$ and $\psi_n: \mathbb F_{q^n} \to \overline{\mathbb Z}_\ell$ a compatible series of additive characters as above.

Then, we have the following identities:
\begin{equation}\label{eqn: jacobi sum identity}
    \frac{J_{q}(\chi_q(v_1),\chi_q(v_2))}{q^{1/2}} = \frac{J_{q^\ell}(\chi_{q^\ell}(v_1),\chi_{q^\ell}(v_2))}{q^{\ell/2}}
\end{equation}
and
\begin{equation}\label{eqn: Gauss sum identity}
        \frac{g_{q}(\psi,\chi_q(v))\chi_q(v)(\ell)}{q^{1/2}} = \frac{g_{q^\ell}(\psi,\chi_{q^\ell}(v))}{q^{\ell/2}}
\end{equation}
\end{theorem}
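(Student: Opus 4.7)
The proof proceeds by direct computation from Coleman's theorem (Theorem \ref{thm: Coleman}). I would handle the Gauss sum identity first and then reduce the Jacobi sum identity to it via the classical factorization
\[J_q(\chi_1,\chi_2) = \frac{g_q(\psi,\chi_1)\,g_q(\psi,\chi_2)}{g_q(\psi,\chi_1\chi_2)},\]
valid whenever $\chi_1$, $\chi_2$, and $\chi_1\chi_2$ are all nontrivial.

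For the Gauss sum identity (\ref{eqn: Gauss sum identity}), I would substitute the formula $g_{q^\ell}(\psi,\chi_{q^\ell}(v))=g_q(\psi,\chi_q(v))\,\chi_q(v)(\ell)\,c_q$ from Coleman's theorem directly. By the stated form of the constant, $c_q=c_p^f=\pm\, q^{(\ell-1)/2}$, and dividing both sides of Coleman's equation by $q^{\ell/2}$ immediately yields the claimed identity (up to the sign $(-1)^{rf}$, which I expect to collapse to $+1$ after tracking Coleman's conventions carefully).

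For the Jacobi sum identity (\ref{eqn: jacobi sum identity}), I would apply Coleman's formula to each of the three Gauss sums in the factorization above. Since $\chi_q(v_1+v_2)=\chi_q(v_1)\chi_q(v_2)$ as multiplicative characters, the evaluations at $\ell$ cancel as $\chi_q(v_1)(\ell)\chi_q(v_2)(\ell)/\chi_q(v_1+v_2)(\ell)=1$, while the constants combine as $c_q\cdot c_q/c_q=c_q$. One is therefore left with
\[J_{q^\ell}(\chi_{q^\ell}(v_1),\chi_{q^\ell}(v_2)) = c_q\,J_q(\chi_q(v_1),\chi_q(v_2)),\]
and dividing by $q^{\ell/2}$ gives (\ref{eqn: jacobi sum identity}), again using $c_q=\pm\, q^{(\ell-1)/2}$.

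The main obstacle I anticipate is the careful bookkeeping of the sign $(-1)^{rf}$ coming from $c_p^f$: one must use Coleman's explicit definition of $r$ (and likely that $\ell>2$, so that $(\ell-1)/2$ is an integer) to conclude that this sign is indeed $+1$. A secondary issue is the degenerate cases of (\ref{eqn: jacobi sum identity}) when one of $v_1$, $v_2$, or $v_1+v_2$ corresponds to a trivial character modulo $\ell^n$ (or modulo $\ell^{n+1}$ on the $q^\ell$ side): here the Gauss-sum factorization of the Jacobi sum breaks down, but the Jacobi sums in these cases admit closed-form evaluations ($-1$, or $q-1$, up to sign) so the identity can be checked by direct inspection.
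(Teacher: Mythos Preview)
Your computational skeleton matches the paper's: apply Coleman's theorem to the three Gauss sums in the factorization $J_q(\chi_1,\chi_2)=g_q(\psi,\chi_1)g_q(\psi,\chi_2)/g_q(\psi,\chi_1\chi_2)$, observe that the $\chi(\ell)$ factors cancel while the $c_q$ factors leave a net $c_q=\pm q^{(\ell-1)/2}$, and divide by $q^{\ell/2}$. (The paper actually treats the Jacobi identity first and then says the Gauss identity follows ``in exactly the same manner'', but that ordering is immaterial.)

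The one genuine divergence is how the sign $(-1)^{rf}$ is pinned down. You propose to chase it through Coleman's definitions, and you flag this as your main anticipated obstacle. The paper sidesteps that bookkeeping entirely: it appeals to its own Theorem~\ref{thm: Main theorem, abstract, scalar}, reduced modulo $\ell$, which says the normalized eigenvalues at level $n$ and $n+1$ are congruent modulo $\ell$. Since the two candidate values differ by a sign and $1\not\equiv -1\pmod{\ell}$ for $\ell>2$, the sign is forced to be $+1$. This is a neat indirect argument that trades a delicate reading of \cite{Coleman} for the heavier machinery already built in the paper; your approach would be more self-contained but would require actually carrying out the sign computation you only sketch. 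Either route is legitimate, but be aware that the paper's version gives you the sign for free once the earlier congruence theorem is in hand.

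Your remark about degenerate characters (one of $v_1$, $v_2$, $v_1+v_2$ trivial) is a fair point of care; the paper does not address it explicitly, presumably because in the intended application these characters are all nontrivial (they index nonzero eigenspaces of $M_n$).
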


\begin{proof}
We first prove equation \ref{eqn: jacobi sum identity}. We have the well known identity
\[J_q(\chi_1,\chi_2)g_q(\psi,\chi_1\chi_2) = g_q(\psi,\chi_1)g_q(\psi,\chi_2).\]
By Theorem \ref{thm: Coleman}, we then have
\begin{align*}
    J_{q^\ell}(\chi_{q^\ell}(v_1),\chi_{q^\ell}(v_2)) &= \frac{g_{q^\ell}(\psi,\chi_{q^\ell}(v_1))g_{q^\ell}(\psi,\chi_{q^\ell}(v_2))}{g_{q^\ell}(\psi,\chi_1\chi_2)} \\
    &= \frac{g_{q}(\psi,\chi_{q}(v_1))g_{q}(\psi,\chi_{q}(v_2))c_q}{g_{q}(\psi,\chi_1\chi_2)} \\
    &= J_q(\chi_q(v_1),\chi_q(v_2))c_q
\end{align*}
where $q = p^f$. Since $c_q = \pm q^{\frac{\ell-1}{2}}$, we recover equation \ref{eqn: jacobi sum identity} \textit{up to a sign} by dividing by $q^{\ell/2}$. Finally, upon reducing Theorem \ref{thm: Main theorem, abstract, scalar} $\pmod \ell$, we note that the normalized eigenvalues are all congruent $\pmod{\ell}$ and therefore the sign has to be $+1$.

Equation \ref{eqn: Gauss sum identity} follows in exactly the same manner from Theorem \ref{thm: Coleman}.
\end{proof}

\begin{remark}
We note that the above theorem is in exact accord with Case A, Theorem \ref{thm: main thm, geom} since in the notation of that theorem, it shows that the roots of $h_{n+1}(y)$ are equal to the roots of $h_n(y)$. In other words, we not only have a congruence $h_{n+1}(y) \equiv h_n(y) \pmod{\ell^n}$, we have an equality $h_{n+1}(y) = h_n(y)$ in the two cases considered in this section.
\end{remark}

\appendix


\renewcommand{\refname}{\spacedlowsmallcaps{References}} 

\bibliographystyle{alpha}

\bibliography{sample.bib} 


\end{document}